\newcommand{\beq}{\begin{eqnarray*}}
\newcommand{\feq}{\end{eqnarray*}}
\newcommand{\beqn}{\begin{eqnarray}}
\newcommand{\feqn}{\end{eqnarray}}
\newtheorem{theorem}{Theorem}[section]
\newtheorem{lemma}[theorem]{Lemma}
\newtheorem{corollary}[theorem]{Corollary}
\theoremstyle{definition}
\theoremstyle{remark}
\numberwithin{equation}{section}
\begin{document}
\title[Thresholds for shock formation in traffic flow models]{Thresholds for shock formation in traffic flow models with Arrhenius look-ahead dynamics}
\author{Yongki Lee and Hailiang Liu}
\address{Department of Mathematics, Iowa State University, Ames, Iowa 50010}
\email{yklee@iastate.edu; hliu@iastate.edu}
\keywords{nonlocal conservation laws, well-posedness, shock formation, critical threshold, traffic flows}
\subjclass{Primary, 35L65; Secondary, 35L67} 
\begin{abstract}  We investigate a class of nonlocal conservation laws with the nonlinear advection coupling both  local and nonlocal mechanism, which arises in several applications such as the collective motion of cells  and traffic flows.
 It is proved that the $C^1$ solution regularity of this class of conservation laws will persist at least for a short time. This persistency may continue as long as the solution gradient remains bounded.  Based on this result,  we further identify sub-thresholds for finite time shock formation in traffic flow models with Arrhenius look-ahead dynamics.
\end{abstract}
\maketitle
\section{Introduction}
In this  work we investigate a class of nonlocal conservation laws,
\begin{equation}\label{main}
\left\{
  \begin{array}{ll}
    \partial_t u + \partial_x F(u, \bar{u})=0, & t>0, x\in \mathbb{R}, \hbox{} \\
    u(0,x)=u_0 (x), & x\in \mathbb{R}, \hbox{}
  \end{array}
\right.
\end{equation}
where $u$ is the unknown, $F$ is a given smooth function, and  $\bar u$ is given by
\begin{equation}\label{bk}
\bar{u}(t,x)= (K * u)(t,x) =\int_{\mathbb{R}} K(x-y)u(t, y) \, dy,
\end{equation}
where  $K$ is assumed in $ W^{1,1}(\mathbb{R})$.   The advection couples both local and nonlocal mechanism.  
This class of conservation laws  appears in several applications including  traffic flows \cite{Kurganov, Sopasakis},  the collective motion of biological cells \cite{DS05, MDS08, Perthame}, dispersive water waves \cite{Whitham, Holm, DP99, Liu0},   the radiating gas motion \cite{Ha71, Ro89, LT02} and high-frequency waves in relaxing medium \cite{Hunter, Vak1, Vak2}.


We are interested in the persistence of the $C^1$ solution regularity for (\ref{main}).   As is known that the typical well-posedness result asserts that either a solution of a time-dependent PDE exists for all time or else there is a finite time such that some norm of the solution becomes unbounded as the life span is approached. The natural question is whether there is a critical threshold for the initial data such that the persistence of the $C^1$ solution regularity depends only on crossing such a critical threshold. This concept of critical threshold and associated methodology is originated and developed in a series of papers by Engelberg, Liu and Tadmor \cite{Engelberg, LT02-1, LT03-1} for a class of Euler-Poisson equations.

In this paper we attempt to study such a critical phenomena in (\ref{main}).   $C^1$ solution regularity is shown to persist at least for finite time.  Moreover, such persistency may continue as long as the solution gradient remains bounded.   We also identify sub-thresholds for finite time shock formation in some special traffic flow models, as well as \eqref{main} with one sided interaction kernels. These together partially confirm the critical threshold phenomenon in non-local conservation  laws (\ref{main}).

The traffic flow model that motivated this study is the one with looking ahead relaxation introduced by Sopasakis and Katsoulakis \cite{Sopasakis}:
\begin{equation}\label{traffic}
\left\{
  \begin{array}{ll}
    \partial_t u + \partial_x (u(1-u)e^{-K * u}) =0, & t>0, x \in \mathbb{R}, \\
    u(0,x)=u_0 (x), &   x\in \mathbb{R},\hbox{}
  \end{array}
\right.
\end{equation}
where $u(t,x)$ represents a vehicle density normalized in the interval $[0,1]$ and the relaxation kernel
\begin{equation}\label{kernel_1}
K(r)=\left\{
       \begin{array}{ll}
       \frac{K_0}{\gamma}, & \hbox{if $-\gamma \leq r \leq 0$,}\\
         0, & \hbox{otherwise,}
       \end{array}
     \right.
\end{equation}
is the constant interaction potential, where $\gamma$ is a positive constant proportional to the look-ahead distance and $K_0$ is a positive interaction strength.  We set $K_0 =1$ since in our study this parameter
is not essential.

An improved interaction potential for \eqref{traffic} is introduced in \cite{Kurganov} with
\begin{equation}\label{kernel_2}
K(r)=
\left\{
  \begin{array}{ll}
    \frac{2}{\gamma}\big{(} 1+\frac{r}{\gamma}  \big{)}, & \hbox{$-\gamma \leq r \leq 0$,} \\
    0, & \hbox{otherwise.}
  \end{array}
\right.
\end{equation}
This linear potential is intended to take into account the fact that a car's speed is affected more by nearby vehicles than distant ones.  The authors in \cite{Kurganov} carried out some careful numerical study of  the traffic flow model \eqref{traffic},  through three examples: red light traffic, traffic jam on a busy freeway and a numerical breakdown study.  In the case of a good visibility (large $\gamma$), their numerical studies suggest that  \eqref{traffic} with the modified potential \eqref{kernel_2}  yields solutions that seem to better correspond to reality.

The objective of this article is  therefore twofold : i) to establish local wellposedness of smooth solutions for (\ref{main}); ii) to identify threshold conditions for the finite time shock formation of the traffic flow model \eqref{traffic} subject to two different potentials (\ref{kernel_1}) and (\ref{kernel_2}), respectively.  The finite time shock formation of solutions in traffic flows are understood as congestion formation.

We use $X$ to denote a space $X(\mathbb{R})$ for $X=H^2, W^{1,1}$ and $L^{\infty} + H^2$.  The main results are collectively stated as follows.

\begin{theorem}\label{thm1}
(\textbf{Local existence}) Suppose $F \in C^3(\mathbb{R}, \mathbb{R})$ and $K \in W^{1,1}$. If $u_0 \in L^{\infty} + H^2$, then there exists
$T>0$, depending on the data,
such that \eqref{main} admits a unique solution $u \in C^{1} ([0,T)\times \mathbb{R}).$  Moreover,
 if the maximum life span $T^* < \infty$, then
$$
\lim_{t \rightarrow T^*-} \|\partial_x u(t, \cdot) \|_{L^\infty}=\infty.
$$
\end{theorem}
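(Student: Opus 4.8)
The plan is to rewrite \eqref{main} as a nonlocal transport equation, solve it by a Picard iteration together with the method of characteristics, and extract the continuation criterion from an a priori estimate. Since $K\in W^{1,1}$ and $u$ is $C^1$, one has $\partial_x\bar u=K'*u=K*\partial_x u$, so \eqref{main} takes the form
\begin{equation*}
\partial_t u+a[u]\,\partial_x u=b[u],\qquad a[u]:=F_u(u,K*u),\quad b[u]:=-F_{\bar u}(u,K*u)\,(K'*u).
\end{equation*}
The structural fact that drives the argument is that the nonlocal term enters only through $b[u]$, which depends on $u$ but not on $\partial_x u$; in fact $\|b[u]\|_{L^\infty}\le C(\|u\|_{L^\infty})\,\|K\|_{W^{1,1}}\,\|\partial_x u\|_{L^\infty}$, so the source is genuinely of lower order. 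The working norm for a $C^1$ function will be $\mathcal N(u):=\|u\|_{L^\infty}+\|\partial_x u\|_{L^\infty}+\|\partial_x^2 u\|_{L^2}$; the $\partial_x^2 u\in L^2$ part keeps $C^1$-regularity stable under the iteration, while the $L^\infty$ part accommodates the non-decaying data relevant to \eqref{traffic}.

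For existence, put $u^{(0)}\equiv u_0$, and given $u^{(n)}$ let $u^{(n+1)}$ solve the linear transport problem $\partial_t u^{(n+1)}+a[u^{(n)}]\,\partial_x u^{(n+1)}=b[u^{(n)}]$ with $u^{(n+1)}(0,\cdot)=u_0$, written explicitly along the flow $\dot X^{(n)}=a[u^{(n)}](t,X^{(n)})$ as $u^{(n+1)}(t,X^{(n)}(t;x))=u_0(x)+\int_0^t b[u^{(n)}](s,X^{(n)}(s;x))\,ds$. Since $F\in C^3$ and $W^{1,1}*(L^\infty+H^2)$ embeds into bounded Lipschitz functions, $a[u],b[u]$ and their first two derivatives are controlled by $\mathcal N(u)$. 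Differentiating the transport equation once and twice and running the corresponding characteristic and $L^2$ energy estimates — the top-order contribution $\int a[u^{(n)}]\,\partial_x^3 u^{(n+1)}\,\partial_x^2 u^{(n+1)}$ integrates by parts into $\frac12\int\partial_x a[u^{(n)}]\,(\partial_x^2 u^{(n+1)})^2$, which is admissible because $\|\partial_x a[u^{(n)}]\|_{L^\infty}$ is controlled by $\mathcal N(u^{(n)})$ — yields a closed inequality $\mathcal N(u^{(n+1)}(t))\le\mathcal N(u_0)+\int_0^t\Phi(\mathcal N(u^{(n)}(s)))\,ds$. Hence there is $T>0$, depending only on $\mathcal N(u_0)$, with $\sup_{[0,T]}\mathcal N(u^{(n)})\le 2\,\mathcal N(u_0)$ for every $n$.

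Convergence and uniqueness are then routine. Subtracting the equations for $u^{(n+1)}$ and $u^{(n)}$ and using the uniform bounds together with the Lipschitz dependence of $a[\cdot],b[\cdot]$ on their argument in the $L^\infty$ norm gives $\sup_{[0,T']}\|u^{(n+1)}-u^{(n)}\|_{L^\infty}\le CT'\sup_{[0,T']}\|u^{(n)}-u^{(n-1)}\|_{L^\infty}$, a contraction after shrinking $T'$; the limit $u$ keeps the bound on $\mathcal N$ by weak-$\ast$ lower semicontinuity, solves \eqref{main}, and is $C^1$ since $\partial_x u$ is continuous and $\partial_t u=-\partial_x F(u,\bar u)$ is then continuous as well. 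Any two solutions with the same data have a difference satisfying a linear transport inequality, so Gronwall forces it to vanish.

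For the continuation criterion, let $[0,T^*)$ be the maximal interval and suppose, for contradiction, that $M:=\sup_{t<T^*}\|\partial_x u(t)\|_{L^\infty}<\infty$. Then $\|K'*u(t)\|_{L^\infty}=\|K*\partial_x u(t)\|_{L^\infty}\le\|K\|_{W^{1,1}}M$, so along characteristics $|\frac{d}{dt}u|=|b[u]|\le C(\|u\|_{L^\infty})\,\|K\|_{W^{1,1}}M$, and Gronwall bounds $\|u(t)\|_{L^\infty}$ on all of $[0,T^*)$ (for \eqref{traffic} this is immediate from the invariant region $0\le u\le1$). With $\|u\|_{L^\infty}$ and $\|\partial_x u\|_{L^\infty}$ under control, the once- and twice-differentiated equations give $\frac{d}{dt}\|\partial_x^2 u\|_{L^2}^2\le C(M,\|u\|_{L^\infty})(1+\|\partial_x^2 u\|_{L^2}^2)$, so $\mathcal N(u(t))$ stays bounded up to $T^*$; since the local existence time depends only on $\mathcal N$, restarting the solution from a time $t_0<T^*$ sufficiently close to $T^*$ extends it beyond $T^*$, contradicting maximality. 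I expect this last step to be the main obstacle: propagating a bound on $\|\partial_x u\|_{L^\infty}$ alone — exploiting that the nonlocal term is lower order and carries a factor $\|\partial_x u\|_{L^\infty}$ — to control of the full norm $\mathcal N$, while faithfully tracking the non-decaying $L^\infty$ component of $u$ through the nonlinear nonlocal terms.
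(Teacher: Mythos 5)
Your architecture coincides with the paper's: both split the equation into a local transport coefficient $a=F_u$ and a lower-order nonlocal source $b=-F_{\bar u}\bar u_x$, solve the linearized problem along characteristics, close an $H^2$-level energy estimate to get a uniform bound on the iterates, contract in a weaker norm, and extract the continuation criterion from an a priori estimate whose growth rate is controlled by $\|\partial_x u\|_{L^\infty}$ alone. The one step that fails as written is the claim that $a[u]$, $b[u]$ ``and their first two derivatives are controlled by $\mathcal N(u)$'' for $\mathcal N(u)=\|u\|_{L^\infty}+\|\partial_x u\|_{L^\infty}+\|\partial_x^2 u\|_{L^2}$. The top-order energy estimate needs $\partial_x^2 b[u]\in L^2$, but $\partial_x^2 b$ contains terms such as $F_{\bar u\bar u\bar u}\,\bar u_x^{\,3}$ and $F_{\bar u uu}\,u_x^2\,\bar u_x$, which under $\mathcal N$ are merely products of bounded functions: nothing in your norm forces $\partial_x u$, hence $\bar u_x=K*\partial_x u$, to lie in $L^2$, so these terms need not be square-integrable and the closed inequality $\mathcal N(u^{(n+1)}(t))\le\mathcal N(u_0)+\int_0^t\Phi(\mathcal N(u^{(n)}(s)))\,ds$ does not actually close. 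The same defect reappears in the continuation step, where you invoke $\frac{d}{dt}\|\partial_x^2 u\|_{L^2}^2\le C(1+\|\partial_x^2 u\|_{L^2}^2)$.

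The paper avoids this by first writing $u=u_0+U$ and running the entire scheme for $U\in L^{\infty}([0,T];H^2)$, so that every factor of $\partial_x U$ and $\bar U_x$ lies simultaneously in $L^2$ and $L^\infty$ and each term of $\partial_x^2 b$ can be estimated by placing exactly one factor in $L^2$ and the rest in $L^\infty$. If you either adopt that affine-space reduction or enlarge $\mathcal N$ to include $\|\partial_x(u-u_0)\|_{L^2}$ (equivalently, work in $u_0+H^2$ throughout), your iteration, your $L^\infty$-based contraction, and your blow-up criterion all go through essentially as in the paper's Banach fixed-point argument and its Corollary on the $H^2$ growth bound.
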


\begin{theorem}\label{thm2}
Consider \eqref{traffic} with constant potential \eqref{kernel_1}. Suppose that $u_0 \in H^2$ and $0\leq u_0 (x) \leq 1$ for all $x \in \mathbb{R}$. If
\begin{equation}\label{blowup_1}
\sup_{x\in \mathbb{R}}[u' _{0} (x)] > \frac{1}{\gamma} \bigg{(}\frac{1}{2}+ \frac{\sqrt{2}}{4} \cdot \sqrt{3 - \min\big{\{}-1, \ \gamma \cdot \inf_{x \in \mathbb{R}} [u' _{0}(x)] \big{\}} } \bigg{)},
\end{equation}
then $u_x$ must blow up at some finite time.
\end{theorem}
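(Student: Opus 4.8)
The plan is to turn the question into a scalar Riccati‑type differential inequality carried along characteristics; the essential new difficulty relative to a local conservation law is a nonlocal forcing term that must be tamed by an a priori bound on the slope $u_x$ itself. As a preliminary I would establish the invariant region $0\le u\le1$: since the flux $u(1-u)e^{-K*u}$ vanishes at $u=0$ and $u=1$, along the characteristics $\dot x=(1-2u)e^{-\bar u}$ the density obeys the logistic‑type ODE $\dot u=e^{-\bar u}\bar u_x\,u(1-u)$, whose coefficient is bounded on compact time intervals, so $u_0\in[0,1]$ propagates to $u(t,x)\in[0,1]$ for all $t<T^{*}$. Consequently $\bar u=\tfrac1\gamma\int_x^{x+\gamma}u\,dy\in[0,1]$, so $e^{-1}\le e^{-\bar u}\le1$, while $\bar u_x=\tfrac1\gamma\big(u(x+\gamma)-u(x)\big)$ gives $|\bar u_x|\le\gamma^{-1}$ and $\bar u_{xx}=\tfrac1\gamma\big(u_x(x+\gamma)-u_x(x)\big)$.

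Differentiating \eqref{traffic} in $x$ and writing $d=u_x$, one finds along characteristics
\begin{equation*}
\dot d=2e^{-\bar u}d^{2}+2(1-2u)e^{-\bar u}\bar u_x\,d-u(1-u)e^{-\bar u}\bar u_x^{2}+u(1-u)e^{-\bar u}\bar u_{xx}.
\end{equation*}
Everything but the last term is harmless; the last term, through $\bar u_{xx}$, couples $d$ to the slope at the shifted point $x+\gamma$, and cannot be bounded in modulus. The remedy is the a priori bound $\gamma u_x(t,x)\ge\min\{-1,\gamma\inf_x u_0'\}$ for $t<T^{*}$. I would prove it by a barrier argument on $m(t)=\inf_x u_x(t,\cdot)$ (attained since $u_x\in H^{1}\hookrightarrow C_0$, and locally Lipschitz in $t$): at a minimizing point $u_{xx}=0$ and $\bar u_{xx}\ge0$, so the display gives $\dot m\ge 2e^{-\bar u}\big[m^{2}+(1-2u)\bar u_x m-\tfrac12u(1-u)\bar u_x^{2}\big]$; using $|\bar u_x|\le\gamma^{-1}$ and the identity $(1-2u)^{2}=1-4u(1-u)$ one checks that the bracket is $\ge0$ whenever $\gamma m\le-1$, so $m$ can never fall below $\min\{-1/\gamma,\inf_x u_0'\}$.

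Now fix a point $\bar x$ where $u_0'$ attains its supremum $M_0:=\sup_x u_0'$ (attained for the same reason) and follow the characteristic through $\bar x$. Substituting $\bar u_{xx}=\tfrac1\gamma\big(u_x(\cdot+\gamma)-d\big)$ into the equation for $d$, replacing $u_x(\cdot+\gamma)$ by its lower bound from the lemma, and estimating the leftover terms with $u(1-u)\le\tfrac14$, $|\bar u_x|\le\gamma^{-1}$ and once more $(1-2u)^{2}=1-4u(1-u)$ (so that, for $d>0$, the net coefficient of $d$ stays nonnegative), one should arrive at
\begin{equation*}
\dot d\ \ge\ 2e^{-\bar u}\Big(d^{2}-\frac{d}{\gamma}-\frac{1+R}{8\gamma^{2}}\Big),\qquad R:=-\min\{-1,\gamma\inf_x u_0'\}\ \ge\ 1 .
\end{equation*}
The larger root of the quadratic in parentheses is exactly $\tfrac1\gamma\big(\tfrac12+\tfrac{\sqrt2}{4}\sqrt{3-\min\{-1,\gamma\inf_x u_0'\}}\big)$, so condition \eqref{blowup_1} says precisely that $M_0=d(0)$ lies strictly to the right of that root. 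Then $d$ is increasing and stays to the right, the parenthesis exceeds $\varepsilon_0 d^{2}$ for some $\varepsilon_0>0$, and $\dot d\ge 2e^{-1}\varepsilon_0 d^{2}$ forces $d\to\infty$ at some finite time; by Theorem~\ref{thm1} this yields $T^{*}<\infty$, i.e. $u_x$ blows up.

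The step I expect to be the crux is the a priori lower bound on $u_x$ together with the sign bookkeeping it feeds: the nonlocal term $\bar u_{xx}$ is the only obstacle separating this from the classical Burgers‑type picture, and the argument works only because (i) at a minimum of $u_x$ this term has the favorable sign, and (ii) after inserting the lower bound, the surviving linear‑in‑$d$ contributions to the blow‑up inequality cannot overwhelm the quadratic term — both facts resting on the elementary relation between $(1-2u)^{2}$ and $u(1-u)$ valid on the invariant region.
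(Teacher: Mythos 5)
Your proposal is correct and follows essentially the same two-step strategy as the paper: a uniform lower bound $\gamma\,\inf_x u_x(t,\cdot)\ge\min\{-1,\gamma\inf_x u_0'\}$ obtained from the favorable sign of $\bar u_{xx}$ at the minimizer, then substitution of that bound into the Riccati inequality for $u_x$ at the maximizer, with the threshold emerging as the larger root of the resulting quadratic --- exactly the paper's computation. The only cosmetic differences are that you follow a fixed characteristic through the initial maximizer and uniformize the quadratic's coefficients before extracting the root (which needs the brief $d>0$ continuation argument you gesture at), whereas the paper tracks the moving supremum $M(t)$, keeps the variable-coefficient quadratic, and bounds its larger root $M_2$ uniformly via its Lemma 3.2.
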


\begin{theorem}\label{thm3}
Consider \eqref{traffic} with linear potential \eqref{kernel_2}. Suppose that $u_0 \in H^2$ and $0\leq u_0 (x) \leq 1$ for all $x \in \mathbb{R}$. If
\begin{equation}\label{blowup_2}
\sup_{x\in \mathbb{R}}[u' _{0} (x)] > \frac{1}{\gamma} \bigg{(} 1+ \frac{1}{2} \cdot \sqrt{6 -  \min\big{\{} -2, \ \gamma \cdot \inf_{x \in \mathbb{R}} [u' _{0}(x)] \big{\}} } \bigg{)},
\end{equation}
then $u_x$ must blow up at some finite time.\\ \\
\end{theorem}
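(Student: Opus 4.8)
The plan is to reduce \eqref{traffic}–\eqref{kernel_2} to a Riccati-type differential inequality for $u_x$ along characteristics, and to show that when $\sup_x u_0'$ exceeds the right side of \eqref{blowup_2} this inequality blows up in finite time; the conclusion is then immediate from the blow-up criterion in Theorem~\ref{thm1}. The argument is the analogue, for the linear potential, of the one used for Theorem~\ref{thm2}.

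\emph{Step 1: a priori bounds and the gradient equation.} Since $F(u,\bar u)=u(1-u)e^{-\bar u}$ vanishes at $u=0$ and at $u=1$, along a characteristic $\dot x=F_u=(1-2u)e^{-\bar u}$ the density solves the scalar logistic ODE $\dot u=\big(e^{-\bar u}\bar u_x\big)\,u(1-u)$, whose coefficient is a prescribed continuous function of $t$; hence $0\le u_0\le1$ propagates to $0\le u\le1$ on the whole life span. As $K\ge0$ with $\|K\|_{L^{1}}=1$, this gives $0\le\bar u\le1$, so $e^{-1}\le e^{-\bar u}\le1$, and from $\bar u_x(x)=\tfrac{2}{\gamma^{2}}\int_x^{x+\gamma}u(s)\,ds-\tfrac2\gamma u(x)$ one gets $|\bar u_x|\le\tfrac2\gamma$. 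Writing $d:=u_x$ and $\phi:=e^{-\bar u}$ and differentiating \eqref{traffic} once in $x$, one obtains along $\dot x=(1-2u)\phi$
\[
\dot d=\phi\Big[\,2d^{2}+\big(2(1-2u)\bar u_x-\tfrac2\gamma u(1-u)\big)d-u(1-u)\bar u_x^{\,2}+\tfrac{2}{\gamma^{2}}u(1-u)\big(u(x+\gamma)-u(x)\big)\Big],
\]
the key input being $\bar u_{xx}=\tfrac{2}{\gamma^{2}}\big(u(x+\gamma)-u(x)\big)-\tfrac2\gamma u_x$: the jump of $K'$ at $r=0$ produces the destabilizing local feedback $-\tfrac2\gamma u_x$, while the remaining nonlocal term involves only $u$, not any shifted derivative of $u$.

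\emph{Step 2: a uniform lower bound for $u_x$.} Let $m(t):=\inf_x u_x(t,\cdot)$; at a point where the infimum is attained one has $u_{xx}=0$, so $m'(t)$ equals the bracket above evaluated at $d=m$. Using $u(x+\gamma)-u(x)=\int_x^{x+\gamma}u_x\,ds\ge\gamma m$, the term $\tfrac{2}{\gamma^{2}}u(1-u)\big(u(x+\gamma)-u(x)\big)$ absorbs $-\tfrac2\gamma u(1-u)m$ and leaves $m'\ge\phi\big[2m^{2}+2(1-2u)\bar u_x m-u(1-u)\bar u_x^{\,2}\big]$. Substituting $\bar u_x=\tfrac2\gamma(A-u)$ with $A:=\tfrac1\gamma\int_x^{x+\gamma}u\,ds\in[0,1]$ and minimizing this quadratic in $m$ over $u,A\in[0,1]$ shows that $m'>0$ whenever $m<-\tfrac2\gamma$; consequently $u_x(t,x)\ge\tfrac1\gamma\min\{-2,\ \gamma\inf_x u_0'\}=:M/\gamma$ for every $t$ in the life span.

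\emph{Step 3: Riccati blow-up, and the main difficulty.} By Step 2, $u(x+\gamma)-u(x)=\int_x^{x+\gamma}u_x\,ds\ge M$; inserting this together with $0\le u\le1$ and $|\bar u_x|\le\tfrac2\gamma$ into the bracket of Step 1 and estimating each of its coefficients yields $\dot d\ge\phi\,q(d)$ for a fixed quadratic $q$ with positive leading coefficient whose larger root is precisely the right side of \eqref{blowup_2}. Take the characteristic issuing from a point $x_0$ where $u_0'$ attains its maximum (which is positive under \eqref{blowup_2}, and attained since $u_0'\in H^{1}$). Then $q(d(0))=q(u_0'(x_0))>0$, so $d$ increases and stays above the larger root of $q$, where $q$ is positive and grows quadratically; since $\phi\ge e^{-1}$, the inequality $\dot d\ge e^{-1}q(d)$ forces $d\to+\infty$ in finite time by Riccati comparison. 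By Theorem~\ref{thm1}, $u_x$ blows up in finite time, which is the assertion. The crux — and the step I expect to be hardest — is controlling the second-order term $u(1-u)e^{-\bar u}\bar u_{xx}$: one must first isolate from it the exact local feedback $-\tfrac2\gamma u_x$ using the singularity structure of the linear kernel, then tame the residual nonlocal piece by the invariant lower bound of Step 2, and only afterwards is the book-keeping of the quadratic's coefficients routine.
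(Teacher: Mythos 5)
Your proposal is correct and follows essentially the same route as the paper: differentiate to get the Riccati equation for $d=u_x$, split $\tilde u_{xx}=\tfrac{2}{\gamma}(\bar u_x-u_x)$ into the local feedback $-\tfrac{2}{\gamma}u_x$ plus a nonlocal remainder controlled by $N(t)=\inf_x u_x$, prove the invariant lower bound $N(t)\ge\tfrac1\gamma\min\{-2,\gamma\inf u_0'\}$ via the smaller root of the $N$-quadratic, and then bound the larger root of the resulting $M$-quadratic by the right side of \eqref{blowup_2} to force Riccati blow-up. The only cosmetic differences (tracking a fixed characteristic from the maximizer of $u_0'$ instead of the moving supremum $\xi(t)$, and comparing with a single frozen-coefficient quadratic rather than invoking Lemma~\ref{lem3.2} directly) do not change the substance of the argument.
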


Regarding these results several remarks are in order.\\ \\
i) Our threshold results  in Theorems \ref{thm2} and \ref{thm3} are valid for any $0< \gamma < \infty$.  When the look-ahead distance $\gamma \rightarrow \infty$,  both threshold conditions are reduced to  $\sup_{x\in \mathbb{R}}[u' _{0} (x)] > 0$.  On the other hand,  when $\gamma \rightarrow \infty$,  model \eqref{traffic} is reduced to the classical Lightwill-Whitham-Richards(LWR) model \cite{Lighthill,Richards},
$$\partial_t u + \partial_x (u(1-u))=0.$$   This local model can be verified to have finite time shock formation if initial data  has positive slope $u'_0 >0$ at some point.
Therefore,  the threshold conditions identified are consistent with that of  the LWR model.
\\ \\
ii) In a  recent work \cite{Li} D. Li and T. Li presents several finite time shock formation scenarios of solutions to \eqref{traffic} with \eqref{kernel_1}.  Their approach is to analyze the solutions along two characteristic lines defined by  $0=u(t, X_1 (t))$ and $1=u(t,X_2 (t))$, with which they justified that if there exist two points $ \alpha_1 < \alpha_2 $, such that $u_0 (\alpha_1)=0$ and $u_0 (\alpha_2)=1$, then $u_x$ must blow up at some finite time.   Compare to their result, our shock formation conditions in Theorems \ref{thm2} and \ref{thm3} may be viewed in the perspective of critical thresholds.\\ \\
iii) The shock formation conditions in Theorem \ref{thm2} and \ref{thm3} are consistent with the numerical results obtained in \cite{Kurganov}. Indeed,  a numerical comparison in \cite{Kurganov} of solutions to \eqref{traffic} with \eqref{kernel_1} for $\gamma =0.1$ and $\gamma =1$ indicates that the solution with $\gamma =0.1$ remains smooth, while the solution with $\gamma =1$ seems to contain a shock discontinuity.\\ \\
iv) The threshold in \eqref{blowup_2} is bigger than that in \eqref{blowup_1}. This observation suggests that under certain initial configuration, the traffic flow model with constant interaction potential may develop a congestion formation, while the model with the linear interaction potential may not.  Roughly speaking, it is understood that the drivers with the linear potential are `smarter' than the drivers with the constant  potential.\\ \\
v) For fixed $\gamma>0$,  both \eqref{blowup_1} and \eqref{blowup_2} reflect some balance between $\sup_{x\in \mathbb{R}}[u' _{0} (x)]$ and $\inf_{x\in \mathbb{R}}[u' _{0} (x)]$ for the finite time shock formation:
if the non-positive term $\inf_{x\in \mathbb{R}}[u' _{0} (x)] $ is relatively small, then $\sup_{x\in \mathbb{R}}[u' _{0} (x)]$ needs to be large for the finite time shock formation.
It indicates that not only the car density behind the traffic jam but also the car density ahead of the traffic jam contribute to the formation of congestion.\\ \\

{\color{black}

We now summarize the main arguments in our proofs  to follow.   For the proof of Theorem \ref{thm1},  we apply the Banach fixed-point theorem to the transformation $S$ defined through $v=S(u)$,
 where $v$ is solved from
\begin{equation}
\left\{
  \begin{array}{ll}
    \partial_t v + F_u v_x + F_{\bar{u}} \bar{u}_x =0, & \hbox{} \\
    v(t=0)=u_0. & \hbox{}
  \end{array}
\right.
\end{equation}
We show that there exists $T>0$ depending on initial data such that  the mapping $v =S(u)$ exists and is a contraction. In so showing, detailed estimates of  \emph{non-local} terms
are crucial, and allow us  to track the dependence of $T$ on the initial data.

For the proofs of  Theorem \ref{thm2}-\ref{thm3}, we trace the Lagrangian dynamics of $d:=u_x$,  which can be obtained from the Eulerian formulation:
\begin{equation}\label{ed}
(\partial_t +(1-2u)e^{-\bar{u}}\partial_x)d =e^{-\bar{u}}\big{[} 2d^2 +2(1-2u)\bar{u}_x d - u(1-u)\{\bar{u}_x \}^2 +u(1-u)\bar{u}_{xx}  \big{]}.
\end{equation}
The right hand side  is quadratic in $d$,  the a priori bound $0\leq u\leq 1$ ensures the boundedness of both $u$ and $\bar u_x $ involved in the coefficients.
The key in our approach  is to  bound   the nonlocal term $\bar{u}_{xx}$ in terms of  $M=\sup_{x \in \mathbb{R}} [u_x (x,t)]$ and $N=\inf_{x \in \mathbb{R}} [u_x (x,t)]$.
 This way we are able to obtain weakly coupled differential inequalities for both $M$ and $N$,  which yield the desired sub-thresholds.

From the proofs of  Theorem \ref{thm2}-\ref{thm3} we observe that  the one-sided interaction property of kernels \eqref{kernel_1} and \eqref{kernel_2}  is crucial.  Hence
our threshold analysis for the traffic flow models is applicable to the class of nonlocal conservation laws \eqref{main} under the following assumptions:\\
($H_1$).     $F \in C^3 (\mathbb{R}, \mathbb{R})$, and the kernel $K(r) \in W^{1,1} $ satisfying
$$
K(r)=
\left\{
  \begin{array}{ll}
    Nondecreasing, & \hbox{$r\leq 0$,} \\
    0, & \hbox{$r>0$.}
  \end{array}
\right.
$$
($H_2$).   $F(0, \cdot)=F(m, \cdot)=0$ and
$$F_{uu} < 0 , \ F_{\bar{u}\bar{u}} > 0, \ \ F_{\bar{u}}<0\quad \text{for}\quad u\in [0, m].
$$
The result can be stated as follows.
\begin{theorem}\label{thm4}  Consider \eqref{main} with \eqref{bk} under assumptions  ($H_1$)-($H_2$).  If $u_0 \in H^2$ and $0  \leq u_0 (x) \leq m $ for all $x \in \mathbb{R}$, then there exists a non-increasing function
$\lambda(\cdot)$ such that if
$$\sup_{x \in \mathbb{R}} [u' _{0}(x)] > \lambda(\inf_{x \in \mathbb{R}} [u' _{0}(x)] ),
$$
then $u_x$ must blow up at some finite time.
\end{theorem}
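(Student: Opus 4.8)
The plan is to mimic the strategy outlined for Theorems \ref{thm2}--\ref{thm3}, carried out at the level of generality of the assumptions ($H_1$)--($H_2$). First I would establish the a priori invariance of the interval $[0,m]$: since $F(0,\cdot)=F(m,\cdot)=0$, the constants $0$ and $m$ are stationary solutions, and the comparison/maximum principle for the (local-in-$u$) transport structure of \eqref{main}---justified using the local $C^1$ theory of Theorem \ref{thm1}---gives $0\le u(t,x)\le m$ for as long as the solution exists. This bound is what makes all the coefficients below bounded. Next I would differentiate \eqref{main} in $x$ and write the Lagrangian equation for $d:=u_x$ along the characteristic $\dot x = F_u(u,\bar u)$:
\begin{equation*}
\dot d = -F_{uu}\, d^2 - 2F_{u\bar u}\,\bar u_x\, d - F_{\bar u\bar u}\,\bar u_x^2 - F_{\bar u}\,\bar u_{xx},
\end{equation*}
which is the general-$F$ analogue of \eqref{ed}. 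Under ($H_2$), $-F_{uu}>0$, so the leading term drives blow-up; the terms linear and constant in $d$ involve $\bar u_x$ and $\bar u_{xx}$, and $\bar u_x = K*u_x$ is bounded by $\|K\|_{L^1}\cdot m$ times nothing---more precisely $|\bar u_x|\le \mathrm{Var}(K)\cdot m$ or $\|K'\|_{L^1}\,m$ depending on the bound used---so the real issue is controlling $\bar u_{xx}=K'*u_x$.

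The key step, exactly as flagged in the excerpt, is to bound $\bar u_{xx}$ in terms of $M(t):=\sup_x u_x(\cdot,t)$ and $N(t):=\inf_x u_x(\cdot,t)$. Here the one-sided, monotone structure of $K$ from ($H_1$) is essential: writing $\bar u_{xx}(t,x)=\int_{-\infty}^0 K'(r)\,u_x(t,x-r)\,dr$ with $K'\ge 0$ as a measure supported on $r\le 0$ (plus possibly a boundary atom from the jump of $K$ at $0$), one gets the two-sided bound $N(t)\,\|K'\|\le \bar u_{xx}(t,x)\le M(t)\,\|K'\|$ where $\|K'\|$ is the total mass of the nonnegative part; combined with $F_{\bar u}<0$, the term $-F_{\bar u}\bar u_{xx}$ is then squeezed between constant multiples of $N$ and $M$. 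Feeding the extremal choices into the Lagrangian ODE at the points realizing the sup and inf (using the standard fact that $M,N$ are Lipschitz and their derivatives are controlled by $\dot d$ evaluated there), I obtain a weakly coupled system of differential inequalities of the form
\begin{equation*}
\dot M \ge a M^2 - bM - c\,|N| - e, \qquad \dot N \ge a N^2 - \cdots,
\end{equation*}
with positive constants $a,b,c,e$ depending only on $m$, on $\|K\|_{W^{1,1}}$, and on bounds for $F_{uu},F_{u\bar u},F_{\bar u\bar u},F_{\bar u}$ over $[0,m]$. Since $N$ can only decrease in a controlled way (its own inequality keeps $|N|$ from exploding faster than a known rate on any interval where $M$ stays finite---or one simply uses $N(t)\ge$ a linear lower bound via a crude estimate), one reduces to a scalar Riccati-type inequality $\dot M \ge a M^2 - (\text{lower-order in } M) $, and a comparison argument shows $M$ blows up in finite time once $M(0)=\sup u_0'$ exceeds a threshold determined by the other constants and by $\inf u_0'$. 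Defining $\lambda(\cdot)$ as that threshold, and checking it is non-increasing in $\inf u_0'$ (a larger, i.e. less negative, infimum makes the obstructive $|N|$ terms smaller, hence an easier blow-up), completes the proof.

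The main obstacle I anticipate is the bookkeeping in the coupled $M$--$N$ system: unlike the explicit traffic models where $F$ is concrete and the constants can be computed in closed form (yielding the clean expressions \eqref{blowup_1}--\eqref{blowup_2}), here one must argue qualitatively that the coupling is ``weak enough'' that $N$ does not sabotage the blow-up of $M$ before $M$ has time to run to infinity. The cleanest route is probably a two-stage argument: on a short initial time interval $[0,\tau]$ with $\tau$ depending only on the data, derive an explicit a priori lower bound $N(t)\ge N_0 - (\text{const})t$ (or even just $N(t) \ge$ some fixed constant) from the $\dot N$ inequality with $M$ replaced by a provisional upper bound, then plug this into the $\dot M$ inequality to get a genuine autonomous Riccati lower bound $\dot M \ge aM^2 - B$ whose blow-up time is shorter than $\tau$ precisely when $M(0)$ is large; the smallness of $\tau$ is absorbed into the definition of $\lambda$. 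Verifying monotonicity of $\lambda$ and that $\lambda$ is finite for every value of $\inf u_0'\le 0$ is then a routine check on the resulting formula.
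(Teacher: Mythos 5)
Your overall architecture coincides with the paper's: invariance of $[0,m]$, the Lagrangian equation for $d=u_x$ along $\dot x = F_u$, tracking $M(t)=\sup_x u_x$ and $N(t)=\inf_x u_x$, bounding $\bar u_{xx}$ through the one-sided monotone kernel, and a Riccati comparison (Lemma \ref{lem3.2}). Two points, however, separate your sketch from a proof. First, your two-sided bound $N\|K'\|\le \bar u_{xx}\le M\|K'\|$ accounts only for the nonnegative part of $K'$ and ignores the contribution of the jump of $K$ at $r=0$; the correct identity is $\bar u_{xx}(t,x)=\int_{-\infty}^0 K'(z)u_x(t,x-z)\,dz - K(0)u_x(t,x)$, and the useful bounds come precisely from evaluating at the extremal points: at $\xi(t)$ one gets $K(0)(N-M)\le\bar u_{xx}\le 0$, while at $\eta(t)$ one gets $0\le\bar u_{xx}\le K(0)(M-N)$.

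Second --- and this is the genuine gap --- you treat the $M$--$N$ system as truly coupled and propose a two-stage, short-time argument using a ``provisional upper bound'' on $M$ and a linearly degrading lower bound on $N$. As described this is circular (there is no a priori upper bound on $M$; the whole point is that $M$ blows up), and it is also unnecessary. The sign information at $\eta(t)$, namely $\bar u_{xx}\ge 0$ combined with $F_{\bar u}<0$, lets you simply discard the nonlocal term from the $\dot N$ inequality, leaving $\dot N\ge -F_{uu}(N-N_1)(N-N_2)$ with roots $N_1\le 0\le N_2$ depending only on $u$ and $\bar u_x$, both of which are uniformly bounded by the invariant region. Lemma \ref{lem3.2}(ii) then yields a \emph{time-uniform} lower bound $N(t)\ge\min\{N(0),\,\min N_1\}=:\tilde N_0$, which fully decouples the system. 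Substituting $\tilde N_0$ into the $\dot M$ inequality produces an autonomous Riccati inequality $\dot M\ge -F_{uu}(M-M_1)(M-M_2)$ whose larger root $M_2$ is uniformly bounded, and $\lambda(N(0)):=\max M_2$ is the threshold; its non-increasing dependence on $N(0)$ is then read off from the explicit formula, exactly as you anticipated. So your plan is recoverable, but the decoupling of the $\dot N$ inequality is the missing idea that makes the argument close.
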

}
We should point out that it was the threshold analysis  for traffic flow models that led us to the thresholds \eqref{blowup_1}, \eqref{blowup_2}  in the first place, which in turn was then extended to the general class (\ref{main}) as summarized in Theorem \ref{thm4}.

We now conclude this section by outlining the rest of the paper. In section 2, we prove local wellposedness for the class of nonlocal conservation laws (\ref{main}). In section 3, we investigate sub-thresholds for nonlocal traffic flow models.   We finally sketch the proof of Theorem \ref{thm4} in the  end of this paper.

\section{Local wellposedness and regularity}
In this section, we study the local well-posedness of  \eqref{main}.  We consider a solution space as $u\in u_0(x) +B^T$, with $B^T:= L^{\infty}([0,T];H^2 _x)$,  which allows $u$ to be non-zero at far field. By transformation
$$
U=u-u_0,
$$
we find the following equation for $U \in B^T$,
$$
U_t +\partial_x F(U+u_0 , \bar{U} + \bar{u}_0)=0.
$$
This lies in the same class as \eqref{main}. With this in mind, from now on, we shall
consider
$$u\in B^{T} := L^{\infty}([0,T];H^2 _x).
$$
We prove the local wellposedness result by the fixed point argument. That is,  we first
define a transformation $S$ as  $v =S(u)$, where $v$ is solved from the following equation
\begin{equation}\label{linearized}
\left\{
  \begin{array}{ll}
    \partial_t v + F_u v_x + F_{\bar{u}} \bar{u}_x =0, & \hbox{} \\
    v(t=0)=u_0, & \hbox{}
  \end{array}
\right.
\end{equation}
and then show this mapping has a fixed point.

We begin by verifying the existence of $v=S(u)$, which is carried out in a series of Lemmata 2.1-2.3.  For simplicity, we take
$$a=F_u \ \ \ and \ \ \ b=-F_{\bar{u}} \bar{u}_x. $$
We bound $a$ and $b$ in terms of $u$ in the following lemma.
\begin{lemma}
Suppose $u \in B^{T}$, $K \in W^{1,1}$. 
Then
\begin{equation}\label{est_a}
\|a_x \|_{H^1} \leq (k (1+ \|K \|_{L^1}))^2 ( 1+ \|u_x \|_{\infty})\|u \|_{H^2}
\end{equation}
and
\begin{equation}\label{est_b}
\|b \|_{H^2} \leq k(1+\|K \|_{L^1})^3 (1+ \|K_x \|_{L^1})(1+\|u_x \|_{\infty})^2 \|u \|_{H^2},
\end{equation}
where $k=k(F)$ is a constant depending on $F$.
In particular, if $\sup_{t \in [0,T]} \|u \|_{H^2} \leq R$, then
$$ \sup_{t\in [0,T]} \| a_x \|_{H^1 } < c_a R^2 \ \ and \ \ \sup_{t\in [0,T]} \|b \|_{H^2 } < c_b R^3, $$
where $c_a = k (1+c_1)(1+ \|K \|_{L^1})^2$, $c_b = k (1+c_1)^2 (1+ \|K \|_{W^{1,1}})^4$ and $c_1$ is an embedding constant.
\end{lemma}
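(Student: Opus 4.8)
The plan is to prove both estimates by differentiating the composite quantities $a = F_u(u,\bar u)$ and $b = -F_{\bar u}(u,\bar u)\,\bar u_x$ explicitly, up to the order required by the target norm, and then controlling each resulting product by Young's convolution inequality on the nonlocal factors and by the one-dimensional embedding $\|f\|_{\infty}\le c_1\|f\|_{H^1}$ on the $L^{\infty}$ factors. First I would record the elementary consequences of $\bar u=K*u$: since $K\in L^1$, Young gives $\|\bar u\|_{H^j}\le\|K\|_{L^1}\|u\|_{H^j}$ for $j\le 2$, and writing $\bar u_x=K*u_x$ also $\|\bar u_x\|_{H^1}\le\|K\|_{L^1}\|u\|_{H^2}$ and $\|\bar u_{xx}\|_{L^2}\le\|K\|_{L^1}\|u\|_{H^2}$; the top-order nonlocal derivative must instead be written as $\bar u_{xxx}=K_x*u_{xx}$, yielding $\|\bar u_{xxx}\|_{L^2}\le\|K_x\|_{L^1}\|u\|_{H^2}$, and this is the only place where $K_x\in L^1$ (i.e.\ $K\in W^{1,1}$, not merely $K\in L^1$) is used, since $u_{xxx}$ is not available in $L^2$. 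Since in addition $\|u\|_{\infty},\|u_x\|_{\infty}\le c_1\|u\|_{H^2}$ and $\|\bar u\|_{\infty},\|\bar u_x\|_{\infty}\le c_1\|K\|_{L^1}\|u\|_{H^2}$, the pair $(u,\bar u)$ stays in a fixed compact subset of $\mathbb{R}^2$, so all partial derivatives of $F$ of order $\le 3$ occurring below are bounded, pointwise and hence along the solution, by a constant I absorb into $k=k(F)$.

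For \eqref{est_a} I would use $a_x = F_{uu}u_x + F_{u\bar u}\bar u_x$ and
\[
a_{xx}=F_{uuu}u_x^2+2F_{uu\bar u}u_x\bar u_x+F_{u\bar u\bar u}\bar u_x^2+F_{uu}u_{xx}+F_{u\bar u}\bar u_{xx}.
\]
In $\|a_x\|_{L^2}$ every summand costs at most one factor $\|K\|_{L^1}$ and one factor $\|u\|_{H^2}$. In $\|a_{xx}\|_{L^2}$ the quadratic terms are handled by $\|gh\|_{L^2}\le\|g\|_{\infty}\|h\|_{L^2}$, pulling out $\|u_x\|_{\infty}$ (or $\|\bar u_x\|_{\infty}\le\|K\|_{L^1}\|u_x\|_{\infty}$) and leaving an $L^2$ factor bounded by $(1+\|K\|_{L^1})\|u\|_{H^2}$, while $F_{uu}u_{xx}$ and $F_{u\bar u}\bar u_{xx}$ cost $(1+\|K\|_{L^1})\|u\|_{H^2}$ with no $\|u_x\|_{\infty}$ at all — the highest nonlocal derivative present is only $\bar u_{xx}$, which is why just $\|K\|_{L^1}$ appears. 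Collecting the worst powers and using $\|a_x\|_{H^1}\le\|a_x\|_{L^2}+\|a_{xx}\|_{L^2}$ gives \eqref{est_a}.

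For \eqref{est_b} I would differentiate $b=-F_{\bar u}\bar u_x$ twice: $b_x$ yields $F_{\bar uu}u_x\bar u_x$, $F_{\bar u\bar u}\bar u_x^2$, $F_{\bar u}\bar u_{xx}$, and $b_{xx}$ yields in addition cubic terms of type $u_x^2\bar u_x$ and $\bar u_x^3$, mixed terms $u_x\bar u_{xx}$, $\bar u_x\bar u_{xx}$, $u_{xx}\bar u_x$, and the top-order term $F_{\bar u}\bar u_{xxx}$. Estimating each product as above — extracting $\|u_x\|_{\infty}$ or $\|\bar u_x\|_{\infty}\le\|K\|_{L^1}\|u_x\|_{\infty}$ repeatedly and keeping a single $L^2$ factor, and bounding $\bar u_{xxx}=K_x*u_{xx}$ by $\|K_x\|_{L^1}\|u\|_{H^2}$ — the most expensive summand ($\bar u_x^3$) carries $\|K\|_{L^1}^3(1+\|u_x\|_{\infty})^2$ and the only summand involving $\|K_x\|_{L^1}$ carries it to the first power, so $(1+\|K\|_{L^1})^3(1+\|K_x\|_{L^1})(1+\|u_x\|_{\infty})^2\|u\|_{H^2}$ dominates all of them and \eqref{est_b} follows. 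For the ``in particular'' clause I would then substitute $\|u\|_{H^2}\le R$ and $\|u_x\|_{\infty}\le c_1R$, bound $1+c_1R\le(1+c_1)R$ for $R\ge 1$ (the fixed-point ball may be enlarged to ensure this) and $(1+\|K\|_{L^1})^3(1+\|K_x\|_{L^1})\le(1+\|K\|_{W^{1,1}})^4$, landing on $\|a_x\|_{H^1}<c_aR^2$ and $\|b\|_{H^2}<c_bR^3$ with $c_a,c_b$ as stated.

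The step that requires the most care — though it is bookkeeping rather than a real obstacle — is organizing the expansions of $a_{xx}$ and $b_{xx}$ so that in the single worst summand the numbers of convolution factors and of $L^{\infty}$ factors match exactly the exponents $2,1$ in \eqref{est_a} and $3,1,2$ in \eqref{est_b}; in particular one must notice that the extra derivative $K_x$ of the kernel is needed only for $b$, through the term $\bar u_{xxx}$, while $a_x$ is controlled using only $\|K\|_{L^1}$.
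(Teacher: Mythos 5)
Your proposal is correct and follows essentially the same route as the paper: record the Young-inequality bounds $\|K*w\|_{L^2}\le\|K\|_{L^1}\|w\|_{L^2}$ together with $\bar u_{xxx}=K_x*u_{xx}$ (the one place $K\in W^{1,1}$ is needed), expand $a_x$, $a_{xx}$, $b$, $b_x$, $b_{xx}$ by the chain and product rules, and bound each summand by pulling out $\|u_x\|_{\infty}$ or $\|\bar u_x\|_{\infty}\le\|K\|_{L^1}\|u_x\|_{\infty}$ while keeping one $L^2$ factor controlled by $\|u\|_{H^2}$. The only caveat, which the paper shares and glosses over just as you do, is the harmless bookkeeping in the ``in particular'' step where $1+c_1R$ is absorbed into $(1+c_1)R$.
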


\begin{proof}
We begin with some key inequalities for $\bar{u}$: using $\|w * K \|_{L^2} \leq \|K \|_{L^1} \|w \|_{L^2}$ and $K \in W^{1,1}$ we obtain
\begin{equation}\label{bounds}
\begin{split}
&\|\bar{u}_x \|_{L^2} = \|K * u_x \|_{L^2} \leq \|K \|_{L^1} \|u_x \|_{L^2},\\
&\|\bar{u}_{xx} \|_{L^2} = \|K * u_{xx} \|_{L^2} \leq \|K \|_{L^1} \|u_{xx} \|_{L^2},\\
&\|\bar{u}_{xxx} \|_{L^2} =\|K_x * u_{xx} \|_{L^2} \leq \|K_x \|_{L^1} \|u_{xx} \|_{L^2}
\end{split}
\end{equation}
and
$$\|\bar{u}_x \|_{\infty} \leq \|u_x \|_{\infty} \|K \|_{L^1}.$$
We calculate
\begin{equation}\nonumber
\begin{split}
&a_x = F_{uu} u_x + F_{u \bar{u}} \bar{u}_x, \\
&a_{xx} = F_{uuu} u^2 _x + F_{uu\bar{u}} u_x \bar{u}_x + F_{uu}u_{xx} + F_{u\bar{u}u} u_x \bar{u}_x + F_{u\bar{u}\bar{u}} \bar{u}^2 _{x} + F_{u\bar{u}} \bar{u}_{xx},\\
\end{split}
\end{equation}
so that
\begin{equation}\label{est_a_x}\nonumber
\begin{split}
\|a_x \|_{L^2} &\leq k \|u_x \|_{L^2} + k \|K \|_{L^1} \|u_x \|_{L^2}\\
&\leq k (1 + \|K \|_{L^1}) \|u \|_{H^2}.
\end{split}
\end{equation}
\begin{equation}\label{est_a_xx}
\begin{split}
\|a_{xx} \|_{L^2} &\leq k \bigg{(} \|u_x \|_{\infty} \|u_x \|_{L^2} +  \|u_x \|_{\infty} \|K \|_{L^1} \|u_x \|_{L^2} +  \|u_{xx} \|_{L^2}\\
&\hspace{1cm}+  \|u_x \|_{\infty} \|K \|_{L^1} \|u_x \|_{L^2} +  \|u_x \|_{\infty} \|K \|^2 _{L^1} \|u_x \|_{L^2} +  \|K \|_{L^1} \|u_{xx} \|_{L^2} \bigg{)}\\
&\leq k (1 + \|u_x  \|_{\infty} )(1 + \|K \|_{L^1} )^2 \|u \|_{H^2}.
\end{split}
\end{equation}
These together lead to \eqref{est_a}.

We also calculate,
\begin{equation}\nonumber
\begin{split}
&b_x = -F_{\bar{u}u} u_x \bar{u}_x - F_{\bar{u}\bar{u}}\bar{u}^2 _{x} - F_{\bar{u}}\bar{u}_{xx}, \\
&b_{xx} = -F_{\bar{u}uu} u^2 _x \bar{u}_x - F_{\bar{u}u\bar{u}}u_x \bar{u}^2 _x -F_{\bar{u}u}u_x \bar{u}_x - F_{\bar{u}u}u_x \bar{u}_{xx}  \\
&\hspace{1cm} - F_{\bar{u}\bar{u}u} u_x \bar{u}^2 _x - F_{\bar{u}\bar{u}\bar{u}}\bar{u}^3 _x - 2F_{\bar{u}\bar{u}}\bar{u}_x \bar{u}_{xx} \\
&\hspace{1cm} -F_{\bar{u}u}u_x \bar{u}_{xx} - F_{\bar{u} \bar{u}}\bar{u}_x \bar{u}_{xx} - F_{\bar{u}} \bar{u}_{xxx},
\end{split}
\end{equation}
to obtain
\begin{equation}\label{est_b_L}\nonumber
\|b \|_{L^2} \leq k \|K \|_{L^1} \|u_x \|_{L^2}.
\end{equation}
\begin{equation}\label{est_b_x}\nonumber
\begin{split}
\|b_x \|_{L^2} &\leq k \|u_x \|_{\infty} \|K \|_{L^1} \|u_x \|_{L^2} + k \|u_x \|_{\infty} \|K \|^2 _{L^1} \|u_x \|_{L^2} + k \|K \|_{L^1} \|u_{xx} \|_{L^2}\\
&\leq k \bigg{(} (1 + \|u_x \|_{\infty})(1 + \|K \|_{L^1})^2  \bigg{)}  \|u \|_{H^2}.
\end{split}
\end{equation}

\begin{equation}\label{est_b_xx}\nonumber
\begin{split}
\|b_{xx} \|_{L^2} &\leq k \bigg{(} \|u_x \|^2 _{\infty} \|K \|_{L^1}  +  \|u_x \|^2 _{\infty} \|K \|^2 _{L^1}  +  \|u_x \|_{\infty} \|K \|_{L^1}  + \|u_x \| \|K \|_{L^1} \bigg{)} \|u \|_{H^2}\\
&\hspace{0.5cm}+ k \bigg{(} \|u_x \|^2 _{\infty} \|K \|^2 _{L^1} + \|u_x \|^2 _{\infty} \|K \|^3 _{L^1} + 2 \|u_x \|_{\infty} \|K \|^2 _{L^1} \bigg{)} \|u \|_{H^2}\\
&\hspace{0.5cm}+ k \bigg{(} \|u_x \|_{\infty} \|K \|_{L^1} + \|u_x \|_{\infty} \|K \|^2 _{L^1} + \| K_x\|_{L^1}  \bigg{)} \|u \|_{H^2}.\\
\end{split}
\end{equation}
These estimates  give   \eqref{est_b}.
\end{proof}

\begin{lemma}[A priori estimates]
Suppose $u \in B^T$. A sufficiently smooth solution $v$ of \eqref{linearized} must satisfy the energy estimates
\begin{align} \label{v_L}
\sup_{t \in [0,T]} \|v(\cdot ,t ) \|_{L^2} &\leq \bigg{(} \|u_0 \|_{L^2} + T \cdot \sup_{t \in [0,T]} \|b \|_{L^2}  \bigg{)} \exp \bigg{(} \frac{1}{2} \int^T _{0} \|a_x \|_{\infty} d \tau \bigg{)},  \\ \label{v_H}
\sup_{t \in [0,T]} \|v(\cdot ,t) \|_{H^2} &\leq  \bigg{(} \|u_0 \|_{H^2} + T \cdot \sup_{t \in [0,T]} \|b \|_{H^2}  \bigg{)} \exp \bigg{(} \big{(} \frac{3}{2} + c_1 \big{)} \int^T _{0}   \|a_x \|_{H^1} \, d \tau \bigg{)},
\end{align}
where $c_1$ is an embedding constant.
\end{lemma}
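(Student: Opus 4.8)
The plan is to establish both bounds by the classical energy method for the linear transport equation contained in \eqref{linearized}, namely
\[
v_t + a v_x = b, \qquad v(0)=u_0,
\]
with $a=F_u$ and $b=-F_{\bar u}\bar u_x$ as above, followed by the scalar Gronwall inequality. Since $u\in B^T$, a sufficiently smooth $v$ and its first two $x$-derivatives lie in $L^2_x$ and decay at infinity, so every integration by parts below produces no boundary term.

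First I would treat \eqref{v_L}. Multiplying the equation by $v$, integrating over $\mathbb{R}$, and using $\int a v_x v\,dx=\tfrac12\int a\,\partial_x(v^2)\,dx=-\tfrac12\int a_x v^2\,dx$, one gets
\[
\tfrac12\tfrac{d}{dt}\|v\|_{L^2}^2=\tfrac12\!\int a_x v^2\,dx+\int bv\,dx\le \tfrac12\|a_x\|_\infty\|v\|_{L^2}^2+\|b\|_{L^2}\|v\|_{L^2},
\]
hence $\tfrac{d}{dt}\|v\|_{L^2}\le \tfrac12\|a_x\|_\infty\|v\|_{L^2}+\|b\|_{L^2}$; Gronwall together with $\int_0^t\|b\|_{L^2}\,d\tau\le T\sup_{[0,T]}\|b\|_{L^2}$ gives \eqref{v_L}. (The division by $\|v\|_{L^2}$ is made rigorous in the usual way, e.g.\ by regularizing $\|v\|_{L^2}$ to $\sqrt{\|v\|_{L^2}^2+\varepsilon}$ and sending $\varepsilon\to0$.)

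Next, for \eqref{v_H}, I would differentiate the equation once and twice in $x$,
\[
v_{xt}+a v_{xx}+a_x v_x=b_x,\qquad v_{xxt}+a v_{xxx}+2a_x v_{xx}+a_{xx}v_x=b_{xx},
\]
pair the first identity with $v_x$ and the second with $v_{xx}$, integrate, and use $\int a w_x w\,dx=-\tfrac12\int a_x w^2\,dx$ with $w=v_x$ and $w=v_{xx}$ to obtain
\[
\tfrac12\tfrac{d}{dt}\|v_x\|_{L^2}^2\le \tfrac12\|a_x\|_\infty\|v_x\|_{L^2}^2+\|b_x\|_{L^2}\|v_x\|_{L^2},
\]
\[
\tfrac12\tfrac{d}{dt}\|v_{xx}\|_{L^2}^2\le \tfrac32\|a_x\|_\infty\|v_{xx}\|_{L^2}^2+\|a_{xx}\|_{L^2}\|v_x\|_\infty\|v_{xx}\|_{L^2}+\|b_{xx}\|_{L^2}\|v_{xx}\|_{L^2}.
\]
Adding these two estimates to the $L^2$ identity and setting $E:=\|v\|_{H^2}$, Cauchy--Schwarz gives $\|b\|_{L^2}\|v\|_{L^2}+\|b_x\|_{L^2}\|v_x\|_{L^2}+\|b_{xx}\|_{L^2}\|v_{xx}\|_{L^2}\le\|b\|_{H^2}E$, while for the remaining term I use the one-dimensional embeddings $\|a_x\|_\infty\le\|a_x\|_{H^1}$ and $\|v_x\|_\infty\le c_1\|v\|_{H^2}=c_1E$, together with $\|a_{xx}\|_{L^2}\le\|a_x\|_{H^1}$, so that $\|a_{xx}\|_{L^2}\|v_x\|_\infty\|v_{xx}\|_{L^2}\le c_1\|a_x\|_{H^1}E^2$. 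Collecting terms yields
\[
\tfrac12\tfrac{d}{dt}E^2\le\Big(\tfrac32+c_1\Big)\|a_x\|_{H^1}E^2+\|b\|_{H^2}E,
\]
i.e.\ $\tfrac{d}{dt}E\le(\tfrac32+c_1)\|a_x\|_{H^1}E+\|b\|_{H^2}$, and Gronwall together with $\int_0^t\|b\|_{H^2}\,d\tau\le T\sup_{[0,T]}\|b\|_{H^2}$ gives \eqref{v_H}.

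The only genuinely delicate point is the lower-order contribution $\int a_{xx}v_x v_{xx}\,dx$ in the second-derivative estimate: unlike the transport terms it cannot be removed by integration by parts, and instead forces the use of $H^1\hookrightarrow L^\infty$ applied to $v_x$, which is exactly what produces the embedding constant $c_1$ and the extra unit in the exponent $\tfrac32+c_1$. Everything else is a routine combination of integration by parts, Cauchy--Schwarz, and the scalar Gronwall inequality.
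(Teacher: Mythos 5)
Your proof is correct and follows essentially the same route as the paper: an $L^2$ energy estimate for $v$, $v_x$, $v_{xx}$ obtained by differentiating the transport equation, integration by parts on the convective terms, the bound $\int a_{xx}v_xv_{xx}\,dx\le \|a_{xx}\|_{L^2}\|v_x\|_\infty\|v_{xx}\|_{L^2}\le c_1\|a_x\|_{H^1}\|v\|_{H^2}^2$ for the one lower-order commutator term, and Gronwall. The constants $\tfrac12$ and $\tfrac32+c_1$ in the exponents arise in exactly the same way as in the paper's computation.
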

\begin{proof}
Apply $\partial^{l} _{x}$ to the first equation of \eqref{linearized} to obtain,
\begin{equation}\label{diff_l1}
(\partial^{l} _{x} v)_t + a\cdot(\partial^l _x v)_x = h^l,
\end{equation}
where $h^{l} = \partial^{l} _{x}b - \partial^{l} _{x} (av_x) + a(\partial^l _x v)_x$.
Multiplying \eqref{diff_l1} by $\partial^l_x v$ and integrating over $\mathbb{R}$, we obtain,
\begin{equation}\label{el}
\frac{1}{2}\frac{d}{dt}  \int_{\mathbb{R}} (\partial^l_x v)^2 \,dx = \int_{\mathbb{R}} a_x \frac{(\partial^l_x v)^2}{2} + \int_{\mathbb{R}} h^l \cdot (\partial^l_x v) \, dx.
\end{equation}
This with $l=0$ leads to
$$
\frac{d}{dt}\|v\|^2_{L^2}=\int a_x v^2 dx +2 \int bvdx \leq \|a_x \|_\infty\|v\|^2 _{L^2} +2\|b\|_{L^2}\|v\|_{L^2}.
$$
That is
$$\frac{d}{dt}\|v \|_{L^2} \leq \frac{1}{2}\|a_x \|_{\infty}\|v \|_{L^2} + \|b \|_{L^2},$$
which upon integration gives \eqref{v_L}.
Next, summing (\ref{el}) for $l=0, 1, 2$, we obtain
\begin{equation}\label{recycle}
\begin{split}
\frac{1}{2}\frac{d}{dt} \|v \|^2 _{H^2} &= \frac{1}{2}\int_{\mathbb{R}} a_x \cdot \sum^2 _{l=0} (\partial^l _x v)^2 \, dx + \int_{\mathbb{R}} \sum^{2} _{l=0} h^l \cdot (\partial^l _x v) \, dx\\
&= \frac{1}{2} \int_{\mathbb{R}} a_x (v^2 - v^2 _x -3v^2 _{xx}) \, dx - \int_{\mathbb{R}} a_{xx} v_x v_{xx} \, dx  + \int_{\mathbb{R}}( bv + b_x v_x + b_{xx} ) v_{xx} \, dx \\
&\leq \frac{3}{2} \|a_x \|_{\infty} \|v \|^2 _{H^2} + \|v_x \|_{\infty} \|a_{xx} \|_{L^2} \|v_{xx} \|_{L^2} + \|b \|_{H^2} \|v \|_{H^2} \\
&\leq \bigg{(} \frac{3}{2} + c_1  \bigg{)} \|a_{x} \|_{H^1} \|v \|^2 _{H^2} +  \|b \|_{H^2} \|v \|_{H^2}. \\
\end{split}
\end{equation}
Therefore, we obtain
$$\frac{d}{dt} \|v \|_{H^2} \leq \bigg{(}\frac{3}{2}  +c_1 \bigg{)} \|a_x \|_{H^1} \|v \|_{H^2} + \|b \|_{H^2},$$
which upon integration again gives \eqref{v_H}.
\end{proof}

\begin{lemma}
Suppose the initial data $v(x, 0)=u_0 \in H^2$.  Then for each $u \in B^T$, there exists a unique solution $v \in B^{T}$ of \eqref{linearized}.
\end{lemma}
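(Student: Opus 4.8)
The plan is to produce $v$ by a routine approximation, obtain uniform bounds from the a priori estimates of Lemma 2.2, pass to the limit, and read off uniqueness from the $L^2$-estimate \eqref{v_L}. Since $u\in B^T$ is fixed throughout, the coefficients $a=F_u(u,\bar u)$ and $b=-F_{\bar u}(u,\bar u)\,\bar u_x$ are \emph{given} functions, and Lemma 2.1 together with the embedding $H^1(\mathbb{R})\hookrightarrow L^\infty(\mathbb{R})$ shows that $a\in L^\infty([0,T];W^{1,\infty})$ with $\partial_x a\in L^\infty([0,T];H^1)$ and $b\in L^\infty([0,T];H^2)$. Thus \eqref{linearized} is a linear transport equation with a bounded, globally Lipschitz-in-$x$ coefficient and an $H^2$-source, and the only real task is to produce a solution living in $B^T$.

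First I would introduce the parabolic regularization
\begin{equation*}
\partial_t v^\varepsilon + a\,\partial_x v^\varepsilon - \varepsilon\,\partial_{xx}v^\varepsilon = b,\qquad v^\varepsilon(0)=u_0\qquad (\varepsilon>0),
\end{equation*}
mollifying $a$, $b$, $u_0$ in $x$ if convenient, for which standard linear parabolic theory provides a unique solution smooth enough in $x$ that the integrations by parts in the proof of Lemma 2.2 are justified. (Alternatively one may mollify only the coefficient $a$ and solve the resulting transport equation along its now-smooth characteristics; this gives an explicit representation, but the $H^2$-bound below must still come from the energy estimates.) Running the computation of Lemma 2.2 on $v^\varepsilon$, the additional viscous term contributes, after one integration by parts, $-\varepsilon\,\|\partial_x^{\,l+1}v^\varepsilon\|_{L^2}^2\le 0$ to the energy identity \eqref{el}, so it only helps; hence the differential inequalities behind \eqref{v_L}--\eqref{v_H} hold verbatim and uniformly in $\varepsilon$. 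Consequently $\{v^\varepsilon\}$ is bounded in $L^\infty([0,T];H^2)$, and rewriting the equation as $\partial_t v^\varepsilon = b - a\,\partial_x v^\varepsilon + \varepsilon\,\partial_{xx}v^\varepsilon$ shows $\{\partial_t v^\varepsilon\}$ is bounded in $L^\infty([0,T];L^2)$.

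I would then pass to the limit. By Banach--Alaoglu, extract a subsequence with $v^\varepsilon\overset{*}{\rightharpoonup}v$ in $L^\infty([0,T];H^2)$; since $a$ and $b$ are fixed and multiplication by the fixed function $a\in L^\infty$ is weakly continuous on $L^2$, every term of the regularized equation converges (with $\varepsilon\,\partial_{xx}v^\varepsilon\to 0$), so $v$ solves \eqref{linearized} in the sense of distributions with $v\in L^\infty([0,T];H^2)=B^T$, and weak-$*$ lower semicontinuity of the $H^2$-norm preserves the bounds \eqref{v_L}--\eqref{v_H}. Since $\{v^\varepsilon\}$ is also bounded in $\mathrm{Lip}([0,T];L^2)$, the limit satisfies $v\in C([0,T];L^2)$, and in fact $\partial_t v=b-a\,\partial_x v\in L^\infty([0,T];H^1)$ yields that $v$ is weakly continuous into $H^2$; this makes sense of the initial condition $v(0)=u_0$. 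Finally, if $v_1,v_2\in B^T$ both solve \eqref{linearized} with the same data, then $w:=v_1-v_2$ solves $\partial_t w+a\,\partial_x w=0$ with $w(0)=0$, and \eqref{v_L} with $b\equiv 0$, $u_0\equiv 0$ (legitimate because $\|a_x\|_{L^\infty}$ is integrable on $[0,T]$ by Lemma 2.1) forces $w\equiv 0$.

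I do not anticipate a deep obstacle, because \eqref{linearized} is linear in $v$; the care needed is purely in the bookkeeping of the approximation scheme — checking that the viscous (or mollification) regularization preserves the sign structure so that Lemma 2.2 applies uniformly in $\varepsilon$, and extracting exactly enough time-regularity of the weak-$*$ limit to justify the statement ``$v\in B^T$'' and the attainment of the initial data. That verification, rather than any genuine estimate, is where the writing effort goes.
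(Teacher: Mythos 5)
Your argument is correct, but it follows a genuinely different route from the paper's. The paper solves \eqref{linearized} by the method of characteristics: since $\sup_{t\in[0,T]}\|a_x\|_{H^1}<\infty$ and $H^1(\mathbb{R})\hookrightarrow L^{\infty}(\mathbb{R})$, the coefficient $a$ is Lipschitz in $x$, so the characteristic ODE $\frac{dx}{dt}=a$, $x(0)=x_0$ has a unique flow, along which \eqref{linearized} collapses to $\frac{dv}{dt}=b$; this gives the explicit representation $v(x(x_0,t),t)=u_0(x_0)+\int_0^t b(x(x_0,\tau),\tau)\,d\tau$, and uniqueness comes for free from uniqueness of the ODE. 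Your vanishing-viscosity scheme replaces this three-line formula with an approximation-and-compactness argument: you obtain existence in $B^T$ together with the bounds \eqref{v_L}--\eqref{v_H} directly from the uniform-in-$\varepsilon$ energy estimates (the viscous term having a good sign in \eqref{el}), and uniqueness from the $L^2$ estimate applied to the difference of two solutions, which is legitimate since $a_x\in L^{\infty}$ justifies the integration by parts for $H^2$ solutions. The trade-off is clear: the paper's proof is shorter and yields an explicit solution formula, but as written it leaves the verification that the characteristic representation actually belongs to $L^{\infty}([0,T];H^2)$ implicit in the a priori estimates of Lemma 2.2; your construction carries the $H^2$ regularity and the attainment of the initial data through the limit explicitly, at the cost of invoking linear parabolic theory and weak-$*$ compactness. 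Both arguments are sound for this linear problem.
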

\begin{proof}
Since $\sup_{t\in [0,T]} \| a_x \|_{H^1 _x} < \infty$,
$$\frac{dx}{dt}=a, \ \ x(0)=x_0$$
admits a unique solution $x=x(x_0 , t)$ for each $x_0 \in \mathbb{R}$. Along $x(x_0 ,t)$, \eqref{linearized} reduces to
$$\frac{dv}{dt} =b , \ \ v(0)=u_0 (x_0).$$
Hence $v(x(x_0 ,t),t)=u_0 (x_0) + \int^{t} _{0} b(x(x_0,\tau), \tau) \, d\tau$ and the unique solution for \eqref{linearized} exists.
%
\end{proof}

Now, we are ready to prove Theorem \ref{thm1}.\\

\noindent\textbf{Proof of Theorem \ref{thm1}}:
Let $R$ be any number satisfying $R \geq 2 \|u_0 \|_{H^2}$, we define
\begin{equation}
B^{T} _R := \bigg{\{}\omega\in L^{\infty}([0,T];H^2 ) \, | \, \omega(x,0)\equiv u_0, \, \sup_{t \in [0,T]} \|\omega(\cdot ,t) \|_{H^2} \leq R \bigg{\}}.
\end{equation}
Assume that $u \in B^T _R$, we then have
\begin{equation}\nonumber
\|u(t) \|_{\infty} \leq c_0 R, \quad \|u_x (t) \|_{\infty} \leq c_1 R, \ \ \ 0\leq t \leq T,
\end{equation}
where $c_0$ and $c_1$ are the embedding constants.

We first show that $S$ maps $B^T _R$ into $B^T _R$ for some $T$ small. From \eqref{v_H}, it follows that
\begin{equation}
\begin{split}
\sup_{t \in [0,T]} \|v(\cdot, t) \|_{H^2} &\leq \bigg{(} \frac{R}{2} + T \cdot c_b R^3  \bigg{)} \exp \bigg{(} T\cdot \big{(} \frac{3}{2} +c_1\big{)}c_a R^2   \bigg{)} \\
&\leq R,
\end{split}
\end{equation}
provided $$T \leq T_1:= \frac{1}{3(2+c_1)(c_a + c_b)e R^2 }.$$
Hence,
$$S:B^T _R \rightarrow B^T _R , \ \ \ \forall T\leq T_1. $$

We next show that $S$ is a contraction on $B^T _R$ in the $L^\infty([0, T]; L^2_x)$ norm:
\begin{equation}\label{con}
\sup_{t \in [0,T]} \|S(u_1)-S(u_2) \|_{L^2} \leq \frac{1}{2} \cdot \sup_{t \in [0,T]} \|u_1-u_2\|_{L^2}, \; \forall u_1, u_2\in B^T _R.
\end{equation}
Let $\tilde{v}:=v_1 -v_2=S(u_1)-S(u_2)$,  then difference of \eqref{linearized} for $v_2$ and $v_1$, respectively, leads to
\begin{equation}\label{contraction_10}
\tilde{v}_t + a(u_1)\tilde{v}_x=\tilde b, \ \ \ \tilde v (0,x)=0
\end{equation}
with
\begin{equation}\label{b}
\tilde b=: -\{a(u_1) - a(u_2) \}v_{2x}+b(u_1) - b(u_2).
\end{equation}
Applying \eqref{v_L} we have
\begin{equation}\label{2.16}
\sup_{t\in [0,T]} \|\tilde v \|_{L^2}\leq T\cdot \sup_{t\in [0,T]}\|\tilde b(\cdot , t )\|_{L^2} \exp \bigg{(} \frac{1}{2} \int^T _{0} \|\partial_x a(u_1) \|_{\infty} \, d \tau  \bigg{)} .
\end{equation}
In order to find a time interval such that the contraction property \eqref{con} holds, we need to
estimate $\|\partial_x a(u_1) \|_{\infty}$ and  $\|\tilde b(\cdot , t )\|_{L^2}$.

First we have
\begin{equation}\label{a_x}
\begin{split}
\|\partial_x a(u_1) \|_{\infty} &= \|F_{uu} u_{1x} + F_{u\bar{u}} \bar{u}_{1x} \|_{\infty}\\
&\leq k \big{(} \|u_{1x} \|_{\infty} + \|\bar{u}_{1x} \|_{\infty}  \big{)}\\
&\leq k \big{(} c_1 R + c_1 R \|K \|_{L^1}  \big{)}\\
&=:C_1 R.
\end{split}
\end{equation}
The first term in \eqref{b} is bounded as
\begin{equation}\label{b_1}
\| \{a(u_1) - a(u_2) \}v_{2x}  \|_{L^2} \leq C_1 R \|u_1-u_2 \|_{L^2}.
\end{equation}
This can be seen from the following calculation:

\begin{equation*}
\begin{split}
\| \{a(u_1) - a(u_2) \}v_{2x}  \|_{L^2} &= \| \{ F_{u}(u_1 , \bar{u}_1) - F_{u}(u_2 , \bar{u}_2)  \}v_{2x} \|_{L^2}\\
&\leq  \| \{F_{u}(u_1 , \bar{u}_1) - F_{u} (u_2 , \bar{u}_1) \} v_{2x}\|_{L^2} + \| \{F_{u}(u_2 , \bar{u}_1) - F_{u} (u_2 , \bar{u}_2) \} v_{2x}\|_{L^2}\\
&\leq c_1 R k \bigg{(} \|u_1 -u_2 \|_{L^2} + \|\bar{u}_1 - \bar{u}_2 \|_{L^2} \bigg{)}\\
&\leq k c_1 R (1 + \|K\|_{L^1} )\|u_1-u_2 \|_{L^2}.
\end{split}
\end{equation*}
If we assume $F_{\bar{u}}(0, \cdot)=0$,  then the last term in \eqref{b} has a similar bound:
\begin{equation}\label{b_2}
\begin{split}
 \|b(u_1)-b(u_2)\|_{L^2} \leq C_2 R \|\tilde{u} \|_{L^2}.
\end{split}
\end{equation}
To obtain this bound, we decompose it the following way
$$b(u_1) -b(u_2) = -F_{\bar{u}}(u_1 , \bar{u}_1)\{\bar{u}_{1x} - \bar{u}_{2x} \} -\bar{u}_{2x}\{F_{\bar{u}} (u_1 , \bar{u}_1) - F_{\bar{u}}(u_2 , \bar{u}_1) \} -\bar{u}_{2x}\{F_{\bar{u}} (u_2 , \bar{u}_1) - F_{\bar{u}}(u_2 , \bar{u}_2) \}$$
If we assume $F_{\bar{u}}(0, \cdot)=0$, we have $F_{\bar{u}}(u_1 ,\bar{u}_1) = F_{\bar{u}u}(\xi , \bar{u}_1)u_1$,
\begin{equation*}\nonumber
\begin{split}
\|F_{\bar{u}} (u_1 , \bar{u}_1)\{\bar{u}_{1x} - \bar{u}_{2x} \} \|_{L^2}
&\leq k  \|u_1 \{ \bar{u}_{1x} - \bar{u}_{2x} \} \|_{L^2}\\
&\leq k c_0 R \|\bar{u}_{1x} - \bar{u}_{2x} \|_{L^2}\\
&\leq k c_0 R \|K_x \|_{L^1} \| u_1-u_2 \|_{L^2}.
\end{split}
\end{equation*}
Applying the mean value property to the remaining terms gives that
\begin{equation*}
 \|b(u_1)-b(u_2)\|_{L^2} 
\leq k \{c_0 \|K_x \|_{L^1} +c_1   \|K \|_{L^1} + c_1  \|K \|^2 _{L^1}   \} R \|u_1-u_2 \|_{L^2}.
\end{equation*}
Substituting \eqref{a_x}, \eqref{b_1} and \eqref{b_2} into \eqref{2.16}, we obtain
\begin{equation}
\sup_{t \in [0,T]} \|\tilde{v} \|_{L^2} \leq  (C_1 + C_2)R \cdot T e^{C_1 R \cdot T} \sup_{t \in [0,T]} \|u_1-u_2 \|_{L^2},
\end{equation}
which ensures (\ref{con}) if $T \leq T_2$ with
$$T_2= \frac{1}{2e \cdot (C_1 + C_2)R}.$$
Therefore, for $0<T < T^*$ with
$$
T^* =\min\{T_1 ,T_2 \}=\frac{1}{CR^2}\min\{1, R\},
$$
the map $S$ is a contraction on $B^T _R$ in $L^{\infty}([0,T]; L^2 _x)$ norm and thus possesses a unique fixed point $u$ which is the unique solution of \eqref{main}.\\

Note that without assuming $F_{\bar{u}}(0, \cdot)=0$,
a different  bound than  \eqref{b_2} is obtained
$$\|b(u_1) - b(u_2) \|_{L^2} \leq (C_4 + C_3 R) \|\tilde{u} \|_{L^2},$$
hence $T_2$ satisfying
$$T_2 < \frac{1}{2e\{(C_1 + C_2)R + C_4 \}}$$
still ensures the contraction.    This ends the existence proof.

We prove the second part of Theorem \ref{thm1} through the following corollary:

\begin{corollary}\label{cor}
Let $u$ be the solution obtained in Theorem \ref{thm1} with a maximum life span $[0,T)$. Then
\begin{equation}\label{energy_est_2}
\|u(t, \cdot) \|_{H^2} \leq \|u_0 \|_{H^2} \exp {\bigg{(} }k (1+c_1)(1+ \|K \|_{W^{1,1}})^4 \int^t _{0} (1 + \|u_x \|_{\infty})^2 \, d\tau \bigg{)}, \ 0\leq t <T
\end{equation}
where $c_1$ is the embedding constant.  This infers that only one of the following occurs\\
i) \ $T=\infty$ and $u$ is a global solution;\\
ii) $0 <T <\infty$ and
 $$\lim_{t \rightarrow T-} \|\partial_x u(t , \cdot) \|_{L^\infty} = \infty.$$
\end{corollary}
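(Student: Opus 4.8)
The plan is to obtain the corollary in two stages: first upgrade the $H^{2}$ energy estimate of Lemma 2.2 to the self-consistent bound \eqref{energy_est_2} for the fixed-point solution $u$, and then derive the dichotomy by a continuation argument that exploits the explicit dependence of the existence time in Theorem \ref{thm1} on the size of the data.

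For \eqref{energy_est_2}, the point is that the fixed point $u$ of $S$ solves \eqref{linearized} with $v=u$, hence solves \eqref{main}, so now $a=F_u(u,\bar u)$ and $b=-F_{\bar u}(u,\bar u)\bar u_x$ are themselves built from $u$. One cannot simply quote the inequality $\frac{d}{dt}\|v\|_{H^2}\leq(\frac{3}{2}+c_1)\|a_x\|_{H^1}\|v\|_{H^2}+\|b\|_{H^2}$ from Lemma 2.2 with $v=u$: by Lemma 2.1 the factor $\|a_x\|_{H^1}$ then carries a further $\|u\|_{H^2}$, so the right-hand side becomes quadratic in $\|u\|_{H^2}$ and Gronwall fails. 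Instead I would return to the energy identity \eqref{recycle} and re-estimate each term so that $\|u\|_{H^2}$ never occurs as a coefficient, only $\|u_x\|_\infty$: use $\|a_x\|_\infty\leq k(1+\|K\|_{L^1})\|u_x\|_\infty$; split the term $\|u_x\|_\infty\|a_{xx}\|_{L^2}\|u_{xx}\|_{L^2}$ according to the leading part $F_{uu}u_{xx}$ of $a_{xx}$ and its lower-order parts, controlling the nonlocal contributions through $\|\bar u_{xx}\|_{L^2}\leq\|K\|_{L^1}\|u_{xx}\|_{L^2}$ and $\|u_x\|_{L^2}\|u_{xx}\|_{L^2}\leq\|u\|_{H^2}^{2}$; and bound $\|b\|_{H^2}$ by \eqref{est_b}. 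Collecting the constants gives $\frac{d}{dt}\|u\|_{H^2}\leq k(1+c_1)(1+\|K\|_{W^{1,1}})^{4}(1+\|u_x\|_\infty)^{2}\|u\|_{H^2}$, and Gronwall's inequality yields \eqref{energy_est_2}.

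For the dichotomy, a maximal life span $[0,T)$ exists by patching the local solutions of Theorem \ref{thm1}, uniqueness making them compatible. If $T=\infty$ we are in case i). Suppose $T<\infty$. I claim $\|u(t)\|_{H^2}\to\infty$ as $t\to T-$: otherwise there are $R_0<\infty$ and $t_n\uparrow T$ with $\|u(t_n)\|_{H^2}\leq R_0$, and Theorem \ref{thm1} applied with initial time $t_n$ and data $u(t_n)$ yields a solution on $[t_n,t_n+\tau_0)$ with $\tau_0\geq\frac{1}{C(2R_0)^{2}}\min\{1,2R_0\}>0$ independent of $n$ -- here the explicit form $T^{*}=\frac{1}{CR^{2}}\min\{1,R\}$ is essential -- so for $n$ large $t_n+\tau_0>T$, contradicting maximality; since a nonnegative quantity with $\liminf=+\infty$ has limit $+\infty$, the claim follows. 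Taking logarithms in \eqref{energy_est_2} then forces $\int_0^{T}(1+\|u_x(\tau,\cdot)\|_\infty)^{2}\,d\tau=\infty$, so $\|u_x(t,\cdot)\|_{L^\infty}$ must blow up as $t\to T-$, which is case ii).

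The main obstacle is the first stage: the a priori estimate of Lemma 2.2 is stated for the linear problem with frozen coefficients and is not directly Gronwall-closable once $a$ and $b$ are replaced by the solution's own data; the careful reorganization of \eqref{recycle} so that every coefficient depends on $\|u_x\|_\infty$ alone is the crux. The continuation step is then routine, its only genuinely used ingredient being the uniformity in starting time of the local existence time, which is visible in the explicit bound for $T^{*}$.
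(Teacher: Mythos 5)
Your proposal is correct and follows essentially the same route as the paper: returning to the energy identity \eqref{recycle} with $v\equiv u$, using $\|a_x\|_\infty$, \eqref{est_a_xx} and \eqref{est_b} so that every coefficient of $\|u\|_{H^2}$ involves only $\|u_x\|_\infty$, then closing with Gronwall, and finally obtaining the dichotomy by the standard continuation argument based on the lower bound for the local existence time. Your remark that one must not quote the final inequality of Lemma 2.2 (whose coefficient $\|a_x\|_{H^1}$ reintroduces $\|u\|_{H^2}$) is exactly the point the paper's proof is built around.
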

\begin{proof}
We use again the estimate in \eqref{recycle}, setting $v\equiv u$,
$$\frac{d}{dt} \|u \|_{H^2}  \leq \frac{3}{2}\|a_x \|_{\infty} \|u \|_{H^2} + \|u_x \|_{\infty} \|a_{xx} \|_{L^2} + \|b \|_{H^2}. $$
From $a_x = F_{uu} u_x + F_{u\bar u}  \bar{u}_x$,  it follows that $\|a_x \|_{\infty} \leq c_1 k (1 + \|K \|_{L^1}) \|u \|_{H^2}$. Together with the estimates of $\|a_{xx} \|_{L^2}$ and $\|b \|_{H^2}$ in \eqref{est_a_xx} and \eqref{est_b}, respectively, we obtain
$$\frac{d}{dt} \|u \| _{H^2} \leq k (1+c_1)  (1+\|u_x \|_{\infty})^2 (1+\|K\|_{L^1})^3 (1+ \|K_x \|_{L^1}) \|u \| _{H^2}.$$
Upon integration, we obtain \eqref{energy_est_2}.  The claim in ii) follows from a contradiction argument: If $\lim_{t \rightarrow T-} \|u_x \|_{\infty} <\infty$, it would lead to the boundedness of $\|u\|_{H^2}$. One may therefore extend the solution for some $\tilde T >T$, which contradicts the assumption that $T<\infty$ is a maximal existence interval.
\end{proof}

\section{Sub-thresholds for finite time shock formation}
\subsection{Proof of Theorem \ref{thm2}}
In this subsection, we consider the traffic flow model with Arrhenius look-ahead dynamics:
\begin{equation}\label{main_traff}
\left\{
  \begin{array}{ll}
    \partial_t u + \partial_x (u(1-u)e^{-\bar{u}})=0, & \hbox{} \\
    u(0,x)=u_0 (x), & \hbox{}
  \end{array}
\right.
\end{equation}
where $\bar{u}(t,x)=\frac{1}{\gamma}\int^{x+\gamma} _{x} u(t,y) \, dy$. Here $\gamma>0$ denotes look-ahead distance. In the theory of traffic flow, $u(t,x)$ represents a vehicle density normalized in the interval $[0,1]$. 

We want identify some threshold condition for the shock formation of solutions to (\ref{main_traff}). From Corollary \ref{cor} we know that it suffices to track the dynamics of $u_x$.  Our idea is based on tracing $M(t):=\sup_{x\in \mathbb{R}}[u_x (x,t)]$ and $N(t):=\inf_{x\in \mathbb{R}}[u_x (x,t)]$. The existence and differentiability (in almost everywhere sense) of $M(t)$ and $N(t)$ are proved in \cite{Constantin}.

We also state a useful result, which is proved in \cite{Liu1}.
\begin{lemma}\label{lem3.1}(Lemma 3.1. in \cite{Liu1})
Consider the following quadratic equality for $A(t)$
\begin{equation}\label{liu_inequality}
\frac{dA}{dt}=a(t)(A - b_1 (t))(A - b_2(t)), \ \ \ A(0)=A_0,
\end{equation}
with $a(t)>0$, $b_1 (t) \leq b_2 (t)$ and that $a(t)$, $b_1 (t)$, $b_2 (t)$ are uniformly bounded.\\
i) If $A_0 > \max b_2$, then $A(t)$ will experience a finite time blow-up.\\
ii) If there exists a constant $\bar{b}$ such that
$$b_1 (t) \leq \bar{b} \leq b_2(t),$$
then \eqref{liu_inequality} admits a unique global bounded solution satisfying
$$\min\{A_0, \min b_1 \}  \leq  A(t) \leq \bar{b} ,$$
provided $A_0 \leq \bar{b}$.
\end{lemma}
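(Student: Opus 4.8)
The plan is to treat \eqref{liu_inequality} as a scalar Riccati-type ODE and read off both conclusions from the sign structure of its right-hand side together with standard ODE barrier/comparison arguments. Write $f(t,A)=a(t)(A-b_1(t))(A-b_2(t))$. Since $a(t)>0$, for each fixed $t$ the map $A\mapsto f(t,A)$ is an upward parabola in $A$ with roots $b_1(t)\le b_2(t)$; hence $f>0$ for $A>b_2(t)$ and for $A<b_1(t)$, while $f<0$ for $b_1(t)<A<b_2(t)$. Because $a,b_1,b_2$ are bounded and $f$ is quadratic in $A$, the function $f$ is locally Lipschitz in $A$, uniformly on bounded $A$-intervals, so Picard--Lindel\"of gives a unique local solution. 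The entire task is then to control the maximal existence interval and the range of $A$, which I would do by exhibiting explicit constant barriers.

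For part (ii) I would use two constant barriers. The constant $\bar b$ is a supersolution: at $A=\bar b$ one has $\bar b-b_1(t)\ge 0$ and $\bar b-b_2(t)\le 0$, so $f(t,\bar b)\le 0$; since $A_0\le\bar b$, the scalar comparison principle yields $A(t)\le\bar b$. Symmetrically, with $m:=\min\{A_0,\inf b_1\}$, the bound $m\le\inf b_1\le b_1(t)\le b_2(t)$ forces both factors at $A=m$ to be $\le 0$, so $f(t,m)\ge 0$ and $m$ is a subsolution, giving $A(t)\ge m$. I would phrase each barrier step as a first-crossing-time argument: if $A$ first touched $\bar b$ from below at some $t_1$ then $A'(t_1)\ge 0$, contradicting $A'(t_1)=f(t_1,\bar b)\le 0$ unless $A$ merely rests at $\bar b$, which is itself admissible; the lower barrier is handled identically. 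These two a priori bounds confine $A$ to the compact interval $[m,\bar b]$ throughout its existence, so the continuation principle upgrades the local solution to a global one, and the Lipschitz bound already established gives uniqueness.

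For part (i), assuming $A_0>\max b_2=:\bar b_2$, I would first show that this strict inequality persists: as long as $A>\bar b_2\ge b_2(t)$ the right-hand side is strictly positive, so $A$ is increasing and can never descend back to $\bar b_2$. Then, using $b_1(t)\le b_2(t)\le\bar b_2<A$, I bound $(A-b_1(t))(A-b_2(t))\ge(A-\bar b_2)^2$, whence $A'\ge a_0\,(A-\bar b_2)^2$ with $a_0:=\inf_t a(t)$. Setting $B:=A-\bar b_2>0$, this reads $-\frac{d}{dt}\,B^{-1}\ge a_0$, so integrating gives $B(t)^{-1}\le B(0)^{-1}-a_0 t$, which forces $B\to\infty$ in finite time, i.e. blow-up of $A$ no later than $t^\ast=\big(a_0(A_0-\bar b_2)\big)^{-1}$.

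The step I expect to require the most care is the blow-up estimate in part (i): it genuinely needs a positive \emph{lower} bound $a_0=\inf_t a(t)>0$, not merely boundedness of $a$, so I would read the hypothesis ``$a(t)>0$, uniformly bounded'' as supplying $0<a_0\le a(t)\le a_1<\infty$ (which is exactly what holds in the applications, where $a$ is a positive constant times $e^{-\bar u}$ with $\bar u$ bounded by the invariant region $0\le u\le m$). A secondary technical point is the regularity of the coefficients: if $a,b_1,b_2$ are only bounded and measurable rather than continuous, I would recast the local existence, uniqueness, and comparison steps in the Carath\'eodory framework and interpret the barrier inequalities almost everywhere, which leaves all three conclusions unchanged.
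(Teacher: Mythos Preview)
Your argument is correct. The paper does not actually prove this lemma: it is quoted verbatim from \cite{Liu1} and used as a black box, with the paper only supplying the short deduction of Lemma~\ref{lem3.2} from it. Your barrier/comparison proof of part~(ii) and the Riccati blow-up estimate for part~(i) are the standard route and would serve as a self-contained replacement for the citation.

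One point worth flagging explicitly, which you already caught: the finite-time blow-up in part~(i) genuinely requires a uniform positive lower bound $a_0=\inf_t a(t)>0$, whereas the lemma as stated only says ``$a(t)>0$ and uniformly bounded.'' As you note, in every application in the paper $a(t)=2e^{-\bar u}$ (or $-F_{uu}$) with the solution confined to an invariant region, so $a_0>0$ holds automatically; but if one wanted to use the lemma elsewhere this hypothesis should be made explicit.
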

With this result we obtain the following:
\begin{lemma}\label{lem3.2}
Consider the following quadratic inequality,
\begin{equation}\label{liu_inequality2}
\frac{dB}{dt} \geq a(t)(B - b_1 (t))(B - b_2(t)), \ \ \ B(0)=B_0,
\end{equation}
with $a(t)>0$, $b_1 (t) \leq b_2 (t)$ and that $a(t)$, $b_1 (t)$, $b_2 (t)$ are uniformly bounded.\\
i) If $B_0 > \max b_2$, then $B(t)$ will experience a finite time blow-up.\\
ii) $\min\{B_0 , \min b_1 \} \leq B(t)$, for $t \geq 0$ as long as $B(t)$ remains finite on the time interval $[0,t]$.
\end{lemma}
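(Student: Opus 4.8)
The plan is to deduce both statements from Lemma~\ref{lem3.1} by a comparison (sub-/super-solution) argument, using the fact that the right-hand side $a(t)(A-b_1(t))(A-b_2(t))$ is, for each fixed $t$, a convex upward parabola in $A$. Throughout I write $Q(t,A):=a(t)(A-b_1(t))(A-b_2(t))$ for the quadratic nonlinearity, so that $B$ satisfies $\dot B\ge Q(t,B)$ while the comparison solutions $A$ satisfy $\dot A=Q(t,A)$.

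For part i), suppose $B_0>\max b_2$. Pick $A_0$ with $\max b_2<A_0\le B_0$ and let $A(t)$ solve $\dot A=Q(t,A)$, $A(0)=A_0$; by Lemma~\ref{lem3.1}(i) this $A$ blows up at some finite time $T^\ast$. I claim $B(t)\ge A(t)$ on the common interval of existence. This follows from a standard ODE comparison: let $w=B-A$; then $w(0)\ge0$ and $\dot w\ge Q(t,B)-Q(t,A)=c(t)w$ where $c(t)=a(t)(B(t)+A(t)-b_1(t)-b_2(t))$ is a bounded measurable function on any interval where $B$ stays finite, whence $w(t)\ge w(0)e^{\int_0^t c}\ge0$. (One can instead run the one-sided Gronwall argument directly, or perturb $A_0$ slightly to get strict inequality and avoid the degenerate case; this is the one genuinely fiddly point, since $B$ is only assumed differentiable a.e.\ — I would phrase the comparison integrally: $B(t)-A(t)\ge\int_0^t\big(Q(\tau,B)-Q(\tau,A)\big)d\tau$ and close it with the integral form of Gronwall.) Since $B$ dominates a function that blows up at $T^\ast$, $B$ itself cannot remain finite past $T^\ast$, giving finite-time blow-up.

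For part ii), set $\underline B:=\min\{B_0,\min b_1\}$, a constant, and observe it is a subsolution: $Q(t,\underline B)=a(t)(\underline B-b_1(t))(\underline B-b_2(t))\ge0$ because $\underline B\le\min b_1\le b_1(t)\le b_2(t)$ makes both factors $\le0$, while $\frac{d}{dt}\underline B=0\le Q(t,\underline B)$ — wait, the inequality needed is $\dot B\ge Q(t,B)$ versus the constant, so I instead compare $B$ against the solution $A$ of the \emph{equation} $\dot A=Q(t,A)$ with $A(0)=\underline B$. Since $b_1(t)$ is bounded, one can choose (after possibly shrinking, but actually $\underline B$ itself works) the constant $\bar b$ in Lemma~\ref{lem3.1}(ii) so that $\underline B\le b_1(t)$ for all $t$; then Lemma~\ref{lem3.1}(ii) gives a global solution $A$ with $A(t)\ge\min\{\underline B,\min b_1\}=\underline B$. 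Applying the same comparison inequality as in part i), now with $B(0)=B_0\ge\underline B=A(0)$, yields $B(t)\ge A(t)\ge\underline B$ on every interval where $B$ is finite, which is exactly the claimed lower bound.

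The main obstacle I anticipate is purely technical rather than conceptual: making the comparison principle rigorous when $B$ is only absolutely continuous / differentiable almost everywhere (as is the case for $\sup_x u_x$ and $\inf_x u_x$ by the results of \cite{Constantin}). I would handle this by working with the integral formulations throughout and invoking the integral form of Gronwall's inequality, and by introducing an $\varepsilon$-strict version of the initial inequality and letting $\varepsilon\to0$ if needed to avoid comparing along a degenerate trajectory. Everything else — the sign analysis of the parabola and the choice of the constants $\underline B$ and $\bar b$ — is elementary.
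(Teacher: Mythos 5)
Your part i) is exactly the paper's argument: subtract the comparison ODE from the differential inequality, observe that $w=B-A$ satisfies $\dot w\ge c(t)w$ with $c(t)=a(t)(B+A-b_1-b_2)$, integrate to get $w\ge 0$, and transfer the finite-time blow-up of $A$ (from Lemma \ref{lem3.1}(i)) to $B$. Your remarks about a.e.\ differentiability and the integral form of Gronwall are sensible refinements and do not change the substance.

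Part ii), however, contains a concrete error. You invoke Lemma \ref{lem3.1}(ii) with $\bar b=\underline B=\min\{B_0,\min b_1\}$, but that lemma requires a constant $\bar b$ satisfying $b_1(t)\le\bar b\le b_2(t)$ for all $t$; your $\underline B$ satisfies $\underline B\le\min b_1\le b_1(t)$, which is the \emph{opposite} of the inequality $b_1(t)\le\bar b$ that the hypothesis demands, and no separating constant is assumed to exist in Lemma \ref{lem3.2} (nor need one exist, e.g.\ when $b_1\equiv b_2$ is nonconstant). The repair is already in your own text: your observation that both factors of $Q(t,\cdot)$ are nonpositive at $\underline B$, so $Q(t,\underline B)\ge0$, makes the constant $\underline B$ a subsolution, and the same one-sided Gronwall comparison as in part i), applied to $w=B-\underline B$ via $\dot w=\dot B\ge Q(t,B)-Q(t,\underline B)+Q(t,\underline B)\ge c(t)w$, gives $B(t)\ge\underline B$ directly. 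Equivalently, it shows that the ODE solution $A$ of \eqref{liu_inequality} with $A_0=B_0$ satisfies $A(t)\ge\min\{A_0,\min b_1\}$ on its interval of existence, which is precisely the ``easy to see'' step the paper relies on before applying the comparison \eqref{monotone}. You should delete the appeal to Lemma \ref{lem3.1}(ii) and finish the subsolution argument you started; with that change the proof is complete and coincides with the paper's.
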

\begin{proof}
i) \ Subtracting \eqref{liu_inequality} from  \eqref{liu_inequality2} gives
$$\frac{d}{dt} (B-A) \geq a(t)(B-A)(B+A -b_1 -b_2).$$
Integration leads to
\begin{equation}\label{monotone}
(B-A)(t) \geq (B_0 -A_0)\exp\bigg{(}\int^t _{0} a(t)(B+A -b_1 -b_2) \, d \tau \bigg{)}.
\end{equation}
Therefore, $B_0 \geq A_0$ implies $B(t) \geq A(t)$.
For any $B_0 > \max b_2$ set $A_0 = B_0$, then by Lemma \ref{lem3.1}, $A_0$  will lead to a finite time blow-up of $A(t)$. Hence, by \eqref{monotone}, $B(t)$ will experience a finite time blow-up.\\
ii) \ Consider \eqref{liu_inequality}, it is easy to see that $\min\{A_0 , \min b_1 \} \leq A(t)$. Then \eqref{monotone} gives the result.
\end{proof}
We remark that Lemma \ref{lem3.2} remains valid even if the quadratic inequality holds almost everywhere.

Now, we are ready to prove Theorem \ref{thm2}.\\


\noindent\textbf{Proof of Theorem \ref{thm2}.}
Let $d:=u_x$ and apply $\partial_t$ to the first equation of \eqref{main_traff},
\begin{equation}\label{d_eqn}
\begin{split}
\dot{d}&:=(\partial_t +(1-2u)e^{-\bar{u}}\partial_x)d\\
&=e^{-\bar{u}}\bigg{[} 2d^2 +2(1-2u)\bar{u}_x d - u(1-u)\{\bar{u}_x \}^2 +u(1-u)\bar{u}_{xx}  \bigg{]}.
\end{split}
\end{equation}
Define for $t \in [0,T)$,
\begin{equation}
\begin{split}
&M(t):=\sup_{x \in \mathbb{R}}[u_x (t,x)]=d(t, \xi(t)),\\
&N(t):=\inf_{x \in \mathbb{R}}[u_x (t,x)]=d(t, \eta(t)).\\
\end{split}
\end{equation}
The existence of $\xi(t)$ and $\eta(t)$ is justified by Theorem 2.1 in \cite{Constantin}. Furthermore, $M(t) \geq 0$ and $N(t) \leq 0$.
Then, along $(t,\xi(t))$, we have
$$\bar{u}_{xx} = \frac{1}{\gamma}\{u_{x} (\xi+\gamma) - u_{x}(\xi)\} \geq \frac{1}{\gamma} (-M +N),$$
and
\eqref{d_eqn} can be written as,
\begin{equation}\label{eqn_M}
\begin{split}
\dot{M} &= e^{-\bar{u}} \bigg{(} 2M^2 + 2(1-2u)\bar{u}_x M -u(1-u)\{\bar{u}_x \}^2 + u(1-u)\bar{u}_{xx} \bigg{)}\\
& \geq e^{-\bar{u}} \bigg{(} 2M^2 + 2(1-2u)\bar{u}_x M -u(1-u)\{\bar{u}_x \}^2 + u(1-u)\frac{(-M +N)}{\gamma} \bigg{)}.
\end{split}
\end{equation}

And along $(t, \eta(t))$,  we have
$$\bar{u}_{xx} = \frac{1}{\gamma} \{ u_{x}(\eta+\gamma) - u_{x} (\eta) \} \geq 0 ,$$
and
\eqref{d_eqn} can be written as,
\begin{equation}\label{eqn_N}
\begin{split}
\dot{N} &= e^{-\bar{u}} \bigg{(} 2N^2 + 2(1-2u)\bar{u}_x N -u(1-u)\{\bar{u}_x \}^2 + u(1-u)\bar{u}_{xx}\bigg{)}\\
&\geq e^{-\bar{u}} \bigg{(} 2N^2 + 2(1-2u)\bar{u}_x N -u(1-u)\{\bar{u}_x \}^2 \bigg{)}.
\end{split}
\end{equation}
\eqref{eqn_N} can be written as
\begin{equation}\label{Nt}
\dot{N} \geq 2 e^{-\bar{u}}(N - N_1)(N -N_2),
\end{equation}
where $$
N_1 (u , \bar{u}_x) = \frac{-(1-2u)\bar{u}_x  - \sqrt{\{(1-2u)\bar{u}_x \}^2 + 2u(1-u)\bar{u}^2 _x } }{2}
$$ and
$$
N_2 (u, \bar{u}_x) =\frac{-(1-2u)\bar{u}_x  + \sqrt{\{(1-2u)\bar{u}_x \}^2 + 2u(1-u)\bar{u}^2 _x } }{2}.
$$
We note that $N_1 \leq 0 \leq N_2$ because $0 \leq u(t) \leq 1$.
It can be shown later that $N_1$  is uniformly bounded from below,
\begin{equation}\label{N1l}
N_1 \geq -\frac{1}{\gamma}.
\end{equation}
Applying Lemma \ref{lem3.2} (ii) to (\ref{Nt}) with $\min_{0 \leq u \leq 1 , \ |\omega|\leq \frac{1}{\gamma}} N_1 (u, \omega)=-\frac{1}{\gamma}$, we obtain
\begin{equation}\nonumber
N(t) \geq \min\bigg{\{} -\frac{1}{\gamma}, \ N(0)  \bigg{\}}=: \frac{ \tilde{N}_0}{\gamma}.
\end{equation}
Substituting this lower bound into \eqref{eqn_M}, we obtain
\begin{equation*}
\begin{split}
\dot{M} &\geq e^{-\bar{u}} \bigg{(} 2M^2 + \bigg{\{} 2(1-2u)\bar{u}_x -\frac{u(1-u)}{\gamma} \bigg{\}} M -u(1-u) \bar{u}^2 _x + \frac{u(1-u)\tilde{N}_0}{\gamma^2} \bigg{)}.
\end{split}
\end{equation*}
Rewriting of this inequality gives
\begin{equation}\label{eqn_M_1}
\dot{M} \geq 2 e^{-\bar{u}}(M-M_1)(M -M_2),
\end{equation}
where $M_2(\geq M_1)$ is given by
$$M_2  :=\frac{-\{2(1-2u)\bar{u}_x -\frac{u(1-u)}{\gamma} \} + \sqrt{\{2(1-2u)\bar{u}_x -\frac{u(1-u)}{\gamma} \}^2 +8u(1-u)\bar{u}^2 _x -8\frac{u(1-u)\tilde{N}_0}{\gamma^2} } }{4}.$$

We claim that $M_2$ has an uniform upper bound,
\begin{equation}\label{M2}
    M_2 \leq \frac{1}{\gamma} \bigg{[} \frac{1}{2}+ \frac{\sqrt{2}}{4}\cdot \sqrt{3-\tilde{N}_0}  \bigg{]}.
\end{equation}
By Lemma \ref{lem3.2} (i), if
$$
M(0)>\frac{1}{\gamma} \bigg{[} \frac{1}{2}+ \frac{\sqrt{2}}{4}\cdot \sqrt{3-\tilde{N}_0}  \bigg{]},
$$
then $M(t)$ will blow up a finite time. This is exactly the threshold condition as stated in Theorem \ref{thm2}.

To complete our proof we still need to verify both claims (\ref{M2}) and (\ref{N1l}). \\

To verify (\ref{M2}), we set
$$
v:=\gamma \cdot \bar{u}_x = u(x + \gamma) - u(x).
$$
From $0\leq u(t) \leq 1$ it follows that $-1 \leq v \leq 1$.  If suffices to find upper bound for $M_2$ over the set
$$
\Omega:=\{(u,v)\in \mathbb{R}^2 \ | \ 0\leq u \leq 1 , \ \ -1 \leq v \leq 1 \}.
$$
In fact,
\begin{equation}\nonumber
\begin{split}
M_2 &=\frac{-\{2(1-2u)v -u(1-u) \} + \sqrt{\{2(1-2u)v -u(1-u) \}^2 +8u(1-u)(v^2 - \tilde{N}_0) } }{4\gamma}\\
&\leq \frac{1}{4\gamma} \big{[} 2+ \sqrt{ 4 + 2(1 - \tilde{N}_0) } \big{]}. \\
\end{split}
\end{equation}
Here, we use $\max_{(u,v)\in \Omega} \{ -2(1-2u)v +u(1-u) \}=2$ which can be verified easily since the underlying function is linear in $v$ and quadratic in $u$. For the next one, $\max_{(u,v)\in \Omega} \{ 8u(1-u)(v^2 - \tilde{N}_0) \} = 2(1-\tilde{N}_0)$ is used, which is obtained from the upper bound $u(1-u)\leq 1/4$.


Finally, we are left with the verification of \eqref{N1l}. With $v$ defined above, we have
\begin{equation}\nonumber
Q:=\gamma N_1=\frac{-(1-2u)v - \sqrt{\{(1-2u)v \}^2 +2u(1-u)v^2  } }{2}.
\end{equation}
By rearranging,
\begin{equation}
\begin{split}
Q^2 &= \frac{u(1-u)v^2}{2} -Q\cdot(1-2u)v\\
&\leq \frac{u(1-u)v^2}{2} + \epsilon Q^2 + \frac{(1-2u)^2}{4\epsilon}v^2, \ \ \ 0 < \epsilon < 1.
\end{split}
\end{equation}
It follows that
\begin{equation}
\begin{split}
(1-\epsilon)Q^2  &\leq \frac{v^2}{4 \epsilon}\{ (1-2u)^2 + 2 \epsilon u(1-u) \}\\
&\leq \frac{1}{4 \epsilon},
\end{split}
\end{equation}
where the maximum value is achieved at $\partial \Omega$. This gives
$$
Q^2 \leq \frac{1}{4 \epsilon (1-\epsilon)}.
$$
Since $\epsilon$ is arbitrary, we choose $\epsilon =\frac{1}{2}$ to get $Q^2\leq 1$, hence $Q\geq -1$, which gives (\ref{N1l}).


\subsection{Proof of Theorem \ref{thm3}}
We rewrite the traffic flow model (\ref{traffic}) with the linear potential as
\begin{equation}\label{main_traff_2}
    \partial_t u + \partial_x (u(1-u)e^{-\tilde{u}})=0,
\end{equation}
where
\begin{equation}\label{tu}
    \tilde{u}(t,x)=\frac{2}{\gamma}\int^{x+\gamma} _{x} \big{(} 1+ \frac{x-y}{\gamma}  \big{)} u(t,y) \, dy.
\end{equation}
Let $d:=u_x$ and apply $\partial_x$ to \eqref{main_traff_2},
\begin{equation}\label{d_eqn_2}
\begin{split}
\dot{d}&=(\partial_t +(1-2u)e^{-\tilde{u}}\partial_x)d\\
&=e^{-\tilde{u}}\bigg{[} 2d^2 +2(1-2u)\tilde{u}_x d - u(1-u)\{\tilde{u}_x \}^2 +u(1-u)\tilde{u}_{xx}  \bigg{]}.
\end{split}
\end{equation}
Here,
\begin{equation}\label{u_tilde}
\begin{split}
&\tilde{u}_x = -\frac{2}{\gamma}\bigg{\{}u(x) - \frac{1}{\gamma} \int^{x+\gamma} _{x} u(y) \, dy \bigg{\}} = -\frac{2}{\gamma} (u - \bar{u} ),\\
&\tilde{u}_{xx} = -\frac{2}{\gamma} (u_x - \bar{u}_x ),
\end{split}
\end{equation}
where $\bar{u}=\frac{1}{\gamma}\int^{x+\gamma} _{x} u(y) \, dy$ as defined in the previous section.
Define for $t \in [0,T)$,
\begin{equation}
\begin{split}
&M(t):=\sup_{x \in \mathbb{R}}[u_x (t,x)]=d(t, \xi(t)),\\
&N(t):=\inf_{x \in \mathbb{R}}[u_x (t,x)]=d(t, \eta(t)).\\
\end{split}
\end{equation}
The existence of $\xi(t)$ and $\eta(t)$ is justified by Theorem 2.1 in \cite{Constantin}.
Then, along $(t,\xi(t))$, \eqref{d_eqn_2} can be written as,
\begin{equation}\label{eqn_M_2}
\begin{split}
\dot{M} &= e^{-\tilde{u}} \bigg{(} 2M^2 + 2(1-2u)\tilde{u}_x M -u(1-u)\{\tilde{u}_x \}^2 + u(1-u)\tilde{u}_{xx} \bigg{)}\\
& \geq e^{-\bar{u}} \bigg{(} 2M^2 + 2(1-2u)\tilde{u}_x M -u(1-u)\{\tilde{u}_x \}^2 + u(1-u)\frac{2(N-M)}{\gamma} \bigg{)},
\end{split}
\end{equation}
where the last inequality follows from the fact that
\begin{equation}\nonumber
\begin{split}
\tilde{u}_{xx}(t,\xi)=\frac{2}{\gamma}(\bar{u}_x - M) \geq \frac{2}{\gamma}(N-M).
\end{split}
\end{equation}
And along $(t, \eta(t) )$, \eqref{d_eqn_2} can be written as,
\begin{equation}\label{eqn_N_2}
\begin{split}
\dot{N} &= e^{-\tilde{u}} \bigg{(} 2N^2 + 2(1-2u)\tilde{u}_x N -u(1-u)\{\tilde{u}_x \}^2 + u(1-u)\tilde{u}_{xx}\bigg{)}\\
&\geq e^{-\tilde{u}} \bigg{(} 2N^2 + 2(1-2u)\tilde{u}_x N -u(1-u)\{\tilde{u}_x \}^2 \bigg{)},
\end{split}
\end{equation}
where the last inequality follows from the fact that $\tilde{u}_{xx} (t, \eta) = \frac{2}{\gamma}(\bar{u}_x - N) \geq 0$.
\eqref{eqn_N_2} can be written as
\begin{equation}\label{Nd}
\dot{N}\geq 2e^{-\tilde{u}}(N -N_1)(N - N_2),
\end{equation}
where
$$N_1 = \frac{-(1-2u)\tilde{u}_x - \sqrt{\{(1-2u)\tilde{u}_x \}^2 + 2u(1-u)\tilde{u}^2 _x } }{2}
 $$
 and
 $$
 N_2 =  \frac{-(1-2u)\tilde{u}_x + \sqrt{\{(1-2u)\tilde{u}_x \}^2 + 2u(1-u)\tilde{u}^2 _x } }{2}.
 $$
 We note that $N_1 \leq 0 \leq N_2$ because $0\leq u(t) \leq 1$.

By using the fact that $0\leq u \leq 1$, and $-2\leq \gamma \tilde{u}_x \leq 2$, it can be shown that $N_1$ is uniformly bounded from below,
$$N_1 \geq -\frac{2}{\gamma}.$$
The verification of this inequality is similar to the one in the proof (\ref{N1l}), details are omitted.
With the lower bound of $N_1 (t)$, Lemma \ref{lem3.2} (ii) when applied to (\ref{Nd}) gives
\begin{equation}
N(t) \geq \min\bigg{\{} -\frac{2}{\gamma}, \ N(0)  \bigg{\}}=: \frac{ \tilde{N}_0}{\gamma}.
\end{equation}
Substituting this lower bound into \eqref{eqn_M_2}, we obtain
\begin{equation}\label{eqn_M_3}
\begin{split}
\dot{M} &\geq e^{-\tilde{u}} \bigg{[} 2M^2 + \bigg{\{} 2(1-2u)\tilde{u}_x -\frac{2u(1-u)}{\gamma} \bigg{\}} M -u(1-u) \tilde{u}^2 _x + \frac{2u(1-u)\tilde{N}_0}{\gamma^2} \bigg{]}\\
&= 2e^{-\tilde{u}}(M -M_1)(M - M_2).
\end{split}
\end{equation}
In order to apply Lemma \ref{lem3.2} (i) to (\ref{eqn_M_3}), we proceed to find the upper bound of $M_2$($\geq M_1$).  Let $v:=\gamma \cdot \tilde{u}_x = -2(u -\bar{u})$, then from the fact that $0 \leq u, \bar{u} \leq 1$, we know that $-2 \leq v \leq 2$. We also let
$$\Omega:=\{(u,v)\in \mathbb{R}^2 \ | \ 0\leq u \leq 1 , \ \ -2 \leq v \leq 2  \}$$
then $M_2$ and it's upper bound are given by
\begin{equation}
\begin{split}
M_2 &=\frac{-\{2(1-2u)v -2u(1-u) \} + \sqrt{\{2(1-2u)v -2u(1-u) \}^2 +8u(1-u)(v^2 - 2 \tilde{N}_0) } }{4\gamma}\\
&\leq \frac{1}{4\gamma} \bigg{[} 4 + \sqrt{16 + 2(4 -2 \tilde{N}_0)} \bigg{]}. \\
\end{split}
\end{equation}
Here, we use $\max_{(u,v)\in \Omega} \{ -2(1-2u)v +u(1-u) \}=4$ which can be verified easily since the underlying function is linear in $v$ and quadratic in $u$. We also use $u(1-u)\leq \frac{1}{4}$ in bounding the term $8u(1-u)(v^2 -2 \tilde{N}_0)$. Therefore, by Lemma \ref{lem3.2} (i),  if
$$M(0)>\frac{1}{\gamma} \bigg{[} 1+ \frac{1}{2}\cdot \sqrt{6-\tilde{N}_0}  \bigg{]},$$
then $M(t)$ experience a finite time blow up. Hence we obtain the desired result.

{\color{black}
\subsection{Proof of Theorem \ref{thm4}} We only sketch the proof since it is entirely similar to that in the previous sections.  Let $d:=u_x$ and apply $\partial_x$ to the first equation of \eqref{main}  to obtain
\begin{equation}\label{general}
(\partial_t + F_u \cdot \partial_x)d= - F_{uu} d^2 - 2F_{u \bar{u}}\bar{u}_x d - F_{\bar{u}\bar{u}} \bar{u}^2 _x -F_{\bar{u}}\bar{u}_{xx}.
\end{equation}
It can be shown that $0\leq u \leq m$, and therefore
$$
|\bar{u}| \leq m \|K \|_{W^{1,1}}, \quad |\bar{u}_x| \leq m \|K \|_{W^{1,1}}.
$$
To find the bound of $\bar{u}_{xx}$, we define for $t \in [0,T)$,
\begin{equation}
\begin{split}
&M(t):=\sup_{x \in \mathbb{R}}[u_x (t,x)]=d(t, \xi(t)),\\
&N(t):=\inf_{x \in \mathbb{R}}[u_x (t,x)]=d(t, \eta(t)).\\
\end{split}
\end{equation}
From \eqref{bk}, it follows that
$$
\bar{u}_{xx} (t, x) = \int^{0} _{-\infty} K'(z) u_{x}(t, x-z) \, dz   -K(0)u_x (t, x).
$$
Therefore, along $\xi(t)$,
$$K(0)(N-M) \leq \bar{u}_{xx} \leq 0,$$
and \eqref{general} is reduced to
\begin{equation}\label{gm}
\dot{M} \geq -F_{uu} M^2 -2F_{u\bar{u}}\bar{u}_x M -F_{\bar{u}\bar{u}}\bar{u}^2 _{x} -F_{\bar{u}}K(0)(N-M).
\end{equation}
Also, along $\eta(t)$,
$$0 \leq \bar{u}_{xx} \leq K(0)(M-N).$$
and \eqref{general} is reduced to
\begin{equation}\label{gn}
\dot{N} \geq -F_{uu} N^2 -2F_{u \bar{u}}\bar{u}_x N -F_{\bar{u}\bar{u}}\bar{u}^2 _{x}=-F_{uu}(N-N_1)(N-N_2),
\end{equation}
where
$$
N_1(u, \bar u_x)=\frac{F_{u \bar{u}} \bar{u}_x  - \sqrt{ \left(F^2 _{u\bar{u}} - F_{uu}F_{\bar{u}\bar{u}}  \right)\bar{u}^2 _x }}{-F_{uu}}.
$$
 From \eqref{gn} we infer the lower bound  of $N(t)$ as
$$N(t) \geq \min \{N(0), \ \min_{0\leq u\leq m, |v|\leq m\|K\|_{W^{1, 1}}}  N_1(u, v) \}=:\tilde{N_0},$$
Substituting this lower bound into \eqref{gm}, we obtain
\begin{equation}\nonumber
\begin{split}
\dot{M}&\geq -F_{uu} M^2 -2F_{u \bar{u}}\bar{u}_x M - F_{\bar{u}\bar{u}}\bar{u}^2 _x -F_{\bar{u}}K(0)(\tilde{N}_0 - M) \\
&= -F_{uu}(M-M_1)(M-M_2),
\end{split}
\end{equation}
where
$$M_2(u, \bar u_x)=\frac{2F_{u\bar{u}}\bar{u}_x - F_{\bar{u}}K(0) + \sqrt{\{2F_{u\bar{u}}\bar{u}_x -F_{\bar{u}}K(0) \}^2 -4\{F_{uu}F_{\bar{u}\bar{u}}\bar{u}^2 _x + F_{uu}F_{\bar{u}}K(0)\tilde{N}_0 \}}}{-2F_{uu}}.
$$
Therefore, by Lemma \ref{lem3.2} (i),  if
$$M(0)> \max_{0\leq u\leq m, |v|\leq m\|K\|_{W^{1, 1}}} M_2(u, v)=: \lambda(N(0)),$$
then $M(t)$ will blow up in finite time. Hence we obtain the desired result.
}

\section*{Acknowledgments}
This research was supported by the National Science Foundation under Grant DMS 09-07963.

\bigskip

\bibliographystyle{abbrv}

\end{document}